\newtheorem{theorem}{Theorem}[section]
\newtheorem{lemma}[theorem]{Lemma}
\newtheorem{propos}[theorem]{Proposition}
\newtheorem{cor}[theorem]{Corollary}
\DeclareMathOperator*{\essinf}{ess\,inf}
\newcommand{\overbar}[1]{\mkern 1.5mu\overline{\mkern-1.5mu#1\mkern-1.5mu}\mkern 1.5mu}
\DeclareMathOperator{\trig}{trig}
\title{Chebyshev-type Quadratures for Doubling Weights}
\author{Shoni Gilboa\thanks{Mathematics Department, The Open University of Israel, Raanana 43107, Israel.}
\and Ron Peled\thanks{School of Mathematical Sciences, Tel Aviv
University, Tel-Aviv 69978, Israel. Supported by an ISF grant and an
IRG grant.}}
\begin{document}
\maketitle
\begin{abstract}
A Chebyshev-type quadrature for a given weight function is a
quadrature formula with equal weights. In this work we show that a
method presented by Kane may be used to determine the order of
magnitude of the minimal number of nodes required in Chebyshev-type
quadratures for doubling weight functions. This extends a long line
of research on Chebyshev-type quadratures starting with the 1937
work of Bernstein.
\end{abstract}

\renewcommand{\thefootnote}{\fnsymbol{footnote}}
\footnotetext{\emph{Keywords:} Chebyshev-type quadrature formula,
doubling weights, equal weight quadrature.} \footnotetext{\emph{2010
Mathematics Subject Classification:} 65D32, 41A55.}
\renewcommand{\thefootnote}{\arabic{footnote}}

\section{Introduction}

Let $w$ be a non-negative, integrable function on the interval
$[-1,1]$, such that $\int_{-1}^1 w(t)dt>0$. Such a $w$ will be
called a \emph{weight function}. A \emph{quadrature} (formula) of
degree $n$ is a sequence of points $-1\leq t_1\le t_2\le \ldots\le
t_N\leq 1$, called nodes, and a sequence of weights
$w_1,w_2,\ldots,w_N$ such that
\begin{equation}\label{eq:quadrature_def}
\int_{-1}^1p(t)w(t)dt=\sum_{i=1}^Nw_i p(t_i)
\end{equation}
for every polynomial $p$ of degree at most $n$. If
$w_1=w_2=\cdots=w_N$ then the quadrature is called a
\emph{Chebyshev-type quadrature}, or an equal-weight quadrature.

Given $w$ and the location of the nodes, the equalities
\eqref{eq:quadrature_def} reduce to a set of $n$ linear equations in
$N$ unknowns and it is simple to see from this that a quadrature
formula exists whenever $N\ge n$. The situation becomes more
complicated if we require the quadrature formula to have
non-negative weights. However, the celebrated Gaussian quadrature
formula (see, e.g., \cite[Chapter IV, Section 8]{Karlin-Studden})
satisfies this restriction and achieves the optimal degree $n =
2N-1$.

Surprisingly, the situation changes dramatically if we require the
quadrature to be of Chebyshev-type, i.e., to have equal weights.
This was established by Bernstein \cite{Bernstein1, Bernstein2} in
1937 who proved that for a constant weight function the minimal
possible number of nodes in a Chebyshev-type quadrature of degree
$n$ is of order $n^2$.

Bernstein's result naturally raises the question of understanding
the minimal number of required nodes in Chebyshev-type quadratures
for other weight functions. For a weight function $w$ and a positive
integer $n$, we denote by $N_w(n)$ the minimal number of nodes in a
Chebyshev-type quadrature of degree $n$ for $w$. The most
well-studied case is that of the \emph{Jacobi weight function},
\begin{equation}\label{eq:Jacobi_weight}
w_{\alpha,\beta}(x) := (1-x)^\alpha(1+x)^\beta,\quad
\alpha,\beta>-1.
\end{equation}
Kuijlaars \cite{Kuijlaars2} (see also \cite{Kuijlaars1}) generalized
Bernstein's result by proving that $N_{w_{\alpha,\beta}}(n)$ is of
order $n^{2 + 2\max\{\alpha, \beta\}}$ when $\alpha,\beta\ge 0$.
Kane \cite{Kane} recently extended this result to $\alpha, \beta\ge
-1/2$ (the case $\alpha=\beta=-1/2$ is classical as the Gaussian
quadrature itself already has equal weights). In the regime
$-1<\alpha,\beta\le-\frac{1}{2}$, Kuijlaars \cite{Kuijlaars4} proved
that there exists a $\lambda_0>0$ such that if
$\alpha=\beta=-\frac{1}{2}-\lambda$ or $\alpha = -\frac{1}{2}$ and
$\beta=-\frac{1}{2}-\lambda$ for some $\lambda<\lambda_0$ then
$N_{w_{\alpha,\beta}}(n)$ has order $n$ (and, moreover,
$N_{w_{\alpha,\beta}}(n)\le n+2$).

Regarding results on general classes of weight functions, Kuijlaars
proved a universality result in \cite{Kuijlaars analytic}, showing
that for every weight function $w$ of the form $h(x)(1-x^2)^{-1/2}$,
where $h$ is positive on $[-1,1]$ and analytic in a neighbourhood of
$[-1,1]$, $N_w(n)$ is of order $n$. Geronimus \cite{Geronimus},
continuing ideas of Bernstein and Akhiezer, gave \emph{lower} bounds
for $N_w(n)$ for certain general classes of weight functions. Wagner
\cite{Wagner} gave \emph{upper} bounds on $N_w(n)$ for weight
functions which are bounded below by a constant multiple of the
weight $w_{\alpha, \alpha}$, $\alpha\ge 0$, and bounded above by a
constant. The results in \cite{Wagner} are rather general in that no
further assumptions are placed on the weight function (and,
additionally, more general types of quadrature formulas are
considered). They are, however, somewhat difficult to apply and may
overestimate the order of magnitude of $N_w(n)$ (as $n$ tends to
infinity). An upper bound with similar merits and disadvantages was
given by Rabau and Bajnok \cite{Rabau-Bajnok} (inspired by
\cite{Reyna}). In addition, the work \cite{Peled} gives a simple
\emph{upper} bound on $N_w(n)$ for \emph{every} weight function (and
more generally, every measure). The bound of \cite{Peled} is,
however, far from sharp in many interesting cases, being, for
instance, exponential in $n$ when $w$ is the constant weight
function.

In this work we determine the order of magnitude of $N_w(n)$ (as $n$
tends to infinity) in great generality, extending most of the
previous results on this question. Our results are obtained by
combining topological tools developed by Kane \cite{Kane} with
analytic properties of doubling weight functions given by
Mastroianni and Totik \cite{MastroianniTotik1}.

A weight function $w$ on $[-1,1]$ is called \emph{doubling} if there
is an $L>0$ (necessarily $L\geq 2$), called the \emph{doubling constant}, such that for
every $\delta>0$ and $-1\leq a\leq 1$,
$$\int_{a-2\delta}^{a+2\delta}w(t)dt\leq L\int_{a-\delta}^{a+\delta}w(t)dt,$$
where $w(t)$ is interpreted as $0$ for $t\notin[-1,1]$.

For a weight function $w$ and a positive integer $n$, in addition to
the definition of $N_w(n)$ given above, we denote by
$\overbar{N}_w(n)$ the minimal number such that for every $N\ge
\overbar{N}_w(n)$ there is a Chebyshev-type quadrature of degree $n$
for $w$ with exactly $N$ nodes. We trivially have
$N_w(n)\le\overbar{N}_w(n)$ but it may happen that the two
quantities have radically different orders of magnitudes; see
F\"orster \cite{Forster} for an example where $N_w(n)$ is linear in
$n$ while $\overbar{N}_w(n)$ is exponential in $n$.

\begin{theorem}\label{thm:general} Suppose $w$ is a doubling weight function on $[-1,1]$ with doubling constant $L$.
 Let
$$R_w(n):=\frac{\int_{-1}^1 w(t)dt}{\inf_{-1\leq x\leq 1}\int_{x-\Delta_n(x)}^{x+\Delta_n(x)}w(t)dt},$$
where
$\Delta_n(x):=\frac{1}{n}\left(\sqrt{1-x^2}+\frac{1}{n}\right)$ and
$w(t)$ is again interpreted as $0$ for $t\notin[-1,1]$. Then
\begin{equation*}
c(L)R_w(n)\leq N_w(n) \leq \overbar{N}_w(n)\leq C(L)R_w(n),\quad
n\ge 1,
 \end{equation*}
where $c(L)$ and $C(L)$ are positive constants depending only on
$L$.
\end{theorem}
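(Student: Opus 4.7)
The plan is to treat the two inequalities separately, combining Kane's topological ``degree'' argument from \cite{Kane} for the upper bound with a direct polynomial-testing argument for the lower bound. The common analytic backbone is the Mastroianni--Totik theory of polynomials with respect to doubling weights \cite{MastroianniTotik1}, which supplies: (i) the Christoffel-function equivalence $\lambda_n(w,x)\asymp \int_{x-\Delta_n(x)}^{x+\Delta_n(x)}w$, (ii) peak polynomials of degree $O(n)$ approximating the indicator of $I_x:=[x-\Delta_n(x),x+\Delta_n(x)]$ with controlled $L^\infty$ and fast-decreasing tails, and (iii) Marcinkiewicz--Zygmund-type sampling inequalities. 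Abbreviate $W:=\int_{-1}^{1}w$, so that a Chebyshev-type quadrature with $N$ nodes has common weight $W/N$.

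\textbf{Lower bound.} Let $x^*\in[-1,1]$ approximately attain the infimum in the definition of $R_w(n)$ and fix a Chebyshev-type quadrature of degree at least a constant multiple of $n$. The plan is to test the quadrature identity against the square of a Christoffel-extremal polynomial $P$ of degree $O(n)$ with $P(x^*)=1$ and $\int P^2 w\asymp \int_{I_{x^*}}w\asymp W/R_w(n)$. Expanding via the quadrature gives $\int P^2 w=(W/N)\sum_i P(t_i)^2$, so it suffices to show $\sum_i P(t_i)^2\gtrsim 1$; rearranging will then yield $N\gtrsim R_w(n)$ with a constant depending only on $L$. To establish the sampling lower bound, I would argue by contradiction: if \emph{no} node lies within a bounded (depending on $L$) number of $\Delta_n(x^*)$-scales of $x^*$, then a Mastroianni--Totik fast-decreasing polynomial concentrated near $x^*$ would have $\int pw \asymp \int_{I_{x^*}}w$ while $\sum_i p(t_i)$ would be too small to satisfy the quadrature identity. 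Hence some node lies within $O(1)$ scales of $x^*$, and then the doubling property combined with pointwise bounds on $P$ forces $\sum_i P(t_i)^2 \gtrsim 1$.

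\textbf{Upper bound.} For the upper bound, the plan is to adapt the topological construction of \cite{Kane}, replacing each Jacobi-specific polynomial inequality by its Mastroianni--Totik analogue. Kane's scheme begins with a positive-weight quadrature with $N\asymp R_w(n)$ nodes arranged so that all weights are comparable to $W/N$, embeds this configuration in a continuous family of $N$-node quadratures parameterized by a compact set, and applies a Brouwer-degree argument to the continuous map sending a parameter to the vector of weights minus $W/N$, producing a zero which corresponds to an equal-weight quadrature. The required analytic ingredients --- a well-spread initial positive-weight quadrature with the correct $N$, uniform control of the weight map under small node perturbations, and a boundary/non-degeneracy condition guaranteeing the map has nonzero topological degree --- are all consequences of the Mastroianni--Totik inequalities for doubling weights. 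The flexibility in the initial step (allowing any sufficiently large $N$, not just one special value) gives the stronger statement for $\overbar{N}_w(n)$.

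\textbf{Main obstacle.} The principal technical obstacle will be the upper bound: while Kane's topological framework is robust in outline, transferring it to general doubling weights requires checking that each quantitative step in his argument survives when Jacobi estimates are replaced by doubling-weight analogues, and that the map from parameters to weight vectors remains non-degenerate with the correct topological degree throughout the relevant region. A subsidiary subtlety in the lower bound is that the infimum in $R_w(n)$ is taken over all $x\in[-1,1]$ rather than over nodes; this is handled via the doubling-driven regularity of $x\mapsto \int_{I_x}w$ combined with the peak-polynomial argument sketched above.
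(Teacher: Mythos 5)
Your overall architecture (Kane's topological method for the upper bound, testing the quadrature identity against concentrated polynomials for the lower bound, Mastroianni--Totik as the analytic input) matches the paper's in broad outline, but the plan has a genuine gap at the decisive step of the upper bound. The paper does not adapt Kane's topological construction step by step; it invokes Kane's result as a single black-box criterion (Proposition~\ref{propos:Kane}): a Chebyshev-type trigonometric quadrature of degree $n$ with $N$ nodes exists whenever $N>\frac{1}{2}\int_{-\pi}^{\pi}W\cdot\sup_{p\in\mathcal{T}^{+}_n}\int|p'|/\int pW$. This reduces the entire upper bound to estimating one scalar functional, and that estimate is a two-line chain: the $L^1$ Bernstein inequality $\int|p'|\le n\int|p|$ followed by the Mastroianni--Totik comparison $\int|p|W_n\le C(L)\int|p|W$ with $W_n(x)=n\int_{x-1/n}^{x+1/n}W$, using $R^{\trig}_W(n)W_n\ge n\int W$ pointwise. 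Your proposal never identifies this reduction; instead it defers to ``checking that each quantitative step in Kane's argument survives,'' which you yourself flag as the principal obstacle --- so the upper bound is not actually established by the plan. Relatedly, you work directly on $[-1,1]$, whereas the paper first passes to the circle via $W(\theta)=w(\cos\theta)|\sin\theta|$ (checking that doubling is preserved, Lemma~\ref{lem:w_to_W}, and that $R_w(n)$ and $R^{\trig}_W(n)$ agree up to $C(L)$ factors), precisely because Bernstein's inequality and the uniform scale $1/n$ make the circle version clean; on the interval you would have to manage the varying scale $\Delta_n(x)$ and the endpoint factor in Bernstein's inequality, which your sketch does not address.

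The lower bound sketch is closer to the paper's argument (which first forces a node into a window of width $C(L)/n$ around the test point and then tests against $F_m^{\ell}$), but your specific choices leave holes. The Christoffel-extremal polynomial $P=K_n(x^*,\cdot)/K_n(x^*,x^*)$ is not bounded below near $x^*$ at the scale where a node is guaranteed to exist --- the reproducing kernel oscillates and can vanish within $O(\Delta_n(x^*))$ of $x^*$ --- so ``pointwise bounds on $P$'' do not by themselves yield $\sum_i P(t_i)^2\gtrsim 1$; the paper instead uses powers of the Fej\'er kernel, which are decreasing on the relevant window and bounded below there by $(2/\pi)^{2\ell}$ via \eqref{eq:F_m_at_pi_over_2m+1}. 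Likewise, your node-localization step with a \emph{non-negative} fast-decreasing polynomial does not immediately produce a contradiction (the quadrature sum of a non-negative polynomial is always non-negative); the paper uses the signed polynomial $F_m^{\ell}\left(F_m-F_m(\tfrac{\pi}{2m+1})\right)$, which is $\le 0$ off the window, together with the dyadic tail summation of Lemma~\ref{lemma:Fejer_W_n}, to make the positivity of $\int pW$ incompatible with the absence of a node. These are fixable, but as written the lower bound also falls short of a proof.
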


Many of the weight functions that appear in analysis are doubling.
These include the \emph{generalized Jacobi weight functions} which
are the weight functions having the form
\begin{equation}\label{eq:generalized_Jacobi_weights}
h(x)(1-x)^{\alpha}(1+x)^{\beta}|x-s_1|^{\gamma_1}\ldots|x-s_{\ell}|^{\gamma_{\ell}},
\end{equation}
with $h$ a positive measurable function bounded away from zero and
infinity, $\ell\ge 0$, $-1<s_1<\ldots<s_{\ell}<1$ and
$\alpha,\beta,\gamma_1,\cdots,\gamma_\ell>-1$. The Jacobi weight
function $w_{\alpha,\beta}$ given by \eqref{eq:Jacobi_weight} is
obtained by setting $h\equiv 1$ and $\ell=0$.
Theorem~\ref{thm:general} shows that weight functions $w$ of this
form satisfy that
\begin{equation*}
  \text{both $N_w(n)$ and $\overbar{N}_w(n)$
are of order
}n^{\max\{1,2(\alpha+1),2(\beta+1),\gamma_1+1,\ldots,\gamma_{\ell}+1\}}.
\end{equation*}
This extends the known results even in the Jacobi weight case, where
the order of magnitude of $N_{w_{\alpha,\beta}}(n)$ was not
previously known for all values of $-1<\alpha,\beta\le
-\frac{1}{2}$. Moreover, it shows that the special form of the
Jacobi weight function plays no role in this problem beyond the type
of its zeros and singularities.

Theorem~\ref{thm:general} is a direct consequence of an analogous
theorem for weight functions on the unit circle. This result, which
we shall now describe, seems more natural in that the corresponding
estimate takes a simpler form. A weight function on the unit circle,
or \emph{$2\pi$-periodic weight function}, is a non-negative,
$2\pi$-periodic measurable function $W$ on the real line, such that
$0<\int_{-\pi}^{\pi} W(\theta)d\theta<\infty$. A \emph{trigonometric
quadrature} (formula) of degree $n$ for such a $W$ is a sequence of
real numbers $\theta_1\le \theta_2\le \ldots\le \theta_N$, called
nodes, and a sequence of weights $w_1,w_2,\ldots,w_N$ such that
\begin{equation}\label{eq:_trig_quadrature_def}
\int_{-\pi}^{\pi}p(\theta)W(\theta)d\theta=\sum_{i=1}^N w_i
p(\theta_i)
\end{equation}
for every trigonometric polynomial $p$ of degree at most $n$. If
$w_1=w_2=\cdots=w_N$ then the quadrature is called a
\emph{Chebyshev-type trigonometric quadrature}. A $2\pi$-periodic
weight function $W$ is called \emph{doubling} if there is an $L>0$ (necessarily $L\geq 2$),
called the \emph{doubling constant}, such that for every $\delta>0$
and real $a$,
\begin{equation*}
\int_{a-2\delta}^{a+2\delta}W(\theta)d\theta\leq
L\int_{a-\delta}^{a+\delta}W(\theta)d\theta.
\end{equation*}

For a $2\pi$-periodic weight function $W$ and a positive integer
$n$, we denote by $N^{\trig}_W(n)$ the minimal number of nodes in a
Chebyshev-type trigonometric quadrature of degree $n$ for $W$. We
also denote by $\overbar{N}^{\trig}_W(n)$ the minimal number such
that for every $N\ge \overbar{N}^{\trig}_W(n)$ there is a
Chebyshev-type trigonometric quadrature of degree $n$ for $W$ with
exactly $N$ nodes.

\begin{theorem}\label{thm:trig} Suppose $W$ is a doubling $2\pi$-periodic weight function with doubling constant $L$. Let
\begin{equation}\label{eq:R_trigonometric}
R^{\trig}_W(n):=\frac{\int_{-\pi}^{\pi}
W(\theta)d\theta}{\inf_{x\in\mathbb{R}}\int_{x-\frac{1}{n}}^{x+\frac{1}{n}}W(\theta)d\theta}.
\end{equation}
Then
\begin{equation*}
c(L)R^{\trig}_W(n)\leq
N^{\trig}_W(n)\leq\overbar{N}^{\trig}_W(n)\leq
C(L)R^{\trig}_W(n),\quad n\ge 1,
 \end{equation*}
where $c(L)$ and $C(L)$ are positive constants depending only on
$L$.
\end{theorem}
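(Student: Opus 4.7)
The plan is to prove the three inequalities of Theorem~\ref{thm:trig} separately. The middle inequality $N^{\trig}_W(n)\le\overbar{N}^{\trig}_W(n)$ is definitional; the lower bound relies on Christoffel functions; and the upper bound, the heart of the argument, combines Kane's topological machinery \cite{Kane} with the analytic theory of doubling weights from Mastroianni--Totik \cite{MastroianniTotik1}.

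For the lower bound, my plan is to exploit the Christoffel function of the measure $W\,d\theta$,
$$\lambda_m(x):=\min\left\{\int_{-\pi}^{\pi}|P(\theta)|^2W(\theta)\,d\theta:\,P\text{ a trig poly of degree}\le m,\,P(x)=1\right\}.$$
For doubling $W$, Mastroianni--Totik establish the pointwise equivalence $\lambda_m(x)\asymp\int_{x-1/m}^{x+1/m}W(\theta)\,d\theta$ with constants depending only on $L$, and show that the minimizer $P$ satisfies $|P|\ge 1/2$ on an arc of length $\ge c(L)/m$ about $x$. Given a Chebyshev-type quadrature of degree $n$ with $N$ nodes and common weight $w=(\int W)/N$, I would pick $x_*$ nearly attaining the infimum in \eqref{eq:R_trigonometric} and, by a pigeonholing argument controlled by doubling, perturb $x_*$ by at most $c(L)/n$ to a point $\tilde x_*$ such that some node $\theta_*$ lies within $c(L)/n$ of $\tilde x_*$. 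Letting $P_*$ achieve $\lambda_{\lfloor n/2\rfloor}(\tilde x_*)$, the polynomial $|P_*|^2$ has degree $\le n$, is nonnegative, and is $\ge 1/4$ at $\theta_*$. Applying \eqref{eq:_trig_quadrature_def} to $|P_*|^2$ and using nonnegativity of each summand, one obtains
$$\tfrac{1}{4}\cdot\tfrac{\int W}{N}\le\tfrac{\int W}{N}|P_*(\theta_*)|^2\le\int_{-\pi}^{\pi}|P_*|^2W\le C(L)\inf_{x}\int_{x-1/n}^{x+1/n}W,$$
which rearranges to $N\ge c(L)R^{\trig}_W(n)$.

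For the upper bound, I need to produce, for every $N\ge C(L)R^{\trig}_W(n)$, a Chebyshev-type quadrature of degree $n$ with exactly $N$ nodes. First I would partition the circle into $N$ consecutive arcs $I_1,\ldots,I_N$ each of $W$-mass exactly $(\int W)/N$. The choice of $N$, the definition of $R^{\trig}_W(n)$, and the doubling property together force $|I_j|\le C'(L)/n$ for every $j$, placing each arc at the natural scale for trig polynomials of degree $n$. I would then follow Kane's topological scheme on the configuration space $\prod_{j=1}^{N}I_j$: a Chebyshev-type quadrature with one node per arc is precisely a zero of
\begin{equation*}
F(\theta_1,\ldots,\theta_N):=\left(\sum_{j=1}^{N}e^{ik\theta_j}-\frac{N}{\int W}\int_{-\pi}^{\pi}e^{ik\theta}W(\theta)\,d\theta\right)_{\!1\le|k|\le n}.
\end{equation*}
Existence of such a zero would follow from a topological degree / Borsuk--Ulam computation modeled on \cite{Kane}, whose analytic ingredients are (i) Marcinkiewicz--Zygmund-type sampling inequalities on $\bigcup_jI_j$ relating $\int|Q|^2W$ to weighted sums $\sum|Q(\theta_j)|^2$ for trig polys $Q$ of degree $n$, supplied by \cite{MastroianniTotik1}; and (ii) boundary control of $F$ on $\partial\prod_jI_j$, again a consequence of the sampling theory and doubling.

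The principal obstacle will be running Kane's index/degree computation in the merely-doubling setting. His original argument is calibrated to Jacobi weights, where the arcs and sampling estimates are explicit; replacing them with only the doubling hypothesis requires that every constant entering the boundary analysis and the degree computation depend solely on $L$. Uniformity of these constants in $N$ across the full range $N\ge C(L)R^{\trig}_W(n)$ is what yields the bound on $\overbar{N}^{\trig}_W$ rather than only $N^{\trig}_W$. A minor but genuine technical point is the pigeonholing used to guarantee a node near $x_*$ in the lower-bound argument; this should be resolvable by an auxiliary bump-polynomial bound on the number of nodes per arc, but must be treated carefully.
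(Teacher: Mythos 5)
Your lower bound has a genuine gap at its core. The chain of inequalities you write down only produces information once you have a node $\theta_*$ within $C(L)/n$ of the (near-)minimizing point $x_*$: if no node lies that close, every term $|P_*(\theta_j)|^2$ in the quadrature sum can be arbitrarily small and the identity $\frac{I}{N}\sum_j|P_*(\theta_j)|^2=\int|P_*|^2W$ yields no lower bound on $N$. The existence of such a node is not a ``minor technical point'' and cannot be obtained by pigeonholing, nor by ``an auxiliary bump-polynomial bound on the number of nodes per arc'' --- both of those go in the wrong direction: a counting argument cannot force a node into one prescribed short arc, and an upper bound on nodes per arc only caps occupancy, it never guarantees it. (Indeed, combining such an upper bound over a cover of the circle just returns the tautology $N\le C(L)N$.) In the paper this node-existence claim is the bulk of the lower-bound proof: one tests the quadrature against the sign-changing polynomial $p=F_m^{\ell}\bigl(F_m-F_m(\tfrac{\pi}{2m+1})\bigr)$, which is nonpositive outside the central window $[-\tfrac{\pi}{2m+1},\tfrac{\pi}{2m+1}]$, and shows that $\int_{-\pi}^{\pi}pW>0$ by balancing the positive contribution of the central window against the negative contributions of the $k$-th windows, which Lemma~\ref{lemma:Fejer_W_n} controls by $(2/3|k|)^{\ell}$ times the central mass provided $\ell\gtrsim\log L$ (this is exactly where doubling enters, to beat the polynomial growth $\left(2k+1\right)^{\log_2 L}$ it permits). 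An empty central window would force the quadrature sum, hence $\int pW$, to be $\le 0$, a contradiction. None of this is supplied by the Christoffel-function equivalence $\lambda_m(x)\asymp\int_{x-1/m}^{x+1/m}W$; until you prove the node-existence step, the lower bound is unproved. (Once the node exists, your use of $|P_*|^2$ is fine; the paper simply uses $F_m^{\ell}$ itself, which is $\ge(2/\pi)^{2\ell}$ at the node, together with Lemma~\ref{lemma:Fejer_W_n} again to bound $\int F_m^{\ell}W$ by the central mass.)

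Your upper bound is headed in a workable direction but re-derives, in a special case, machinery the paper invokes as a black box. Rather than re-running the topological argument on the configuration space $\prod_jI_j$ with Marcinkiewicz--Zygmund boundary control (where, note, Kane's topological lemma requires a signed condition $u\cdot F(v)>0$ on each facet, not merely $F\neq 0$ on the boundary, so the ``degree computation'' is not free and its uniformity in $N$ is exactly what Proposition~\ref{propos:Kane} packages), the paper applies Proposition~\ref{propos:Kane} directly. The entire upper bound then reduces to the single inequality
\begin{equation*}
\int_{-\pi}^{\pi}|p'(\theta)|\,d\theta\;\le\;\frac{2C(L)R^{\trig}_W(n)}{I}\int_{-\pi}^{\pi}p(\theta)W(\theta)\,d\theta,\qquad p\in\mathcal{T}^{+}_n,
\end{equation*}
which follows in two lines from the $L^1$ Bernstein inequality \eqref{eq:L_1_Bernstein} and Theorem~\ref{thm:MastroianniTotik1} comparing $W$ with its averaged version $W_n$ (using $R^{\trig}_W(n)W_n\ge nI$ pointwise). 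I would strongly recommend this route over rebuilding Kane's construction; it uses the same analytic input from Mastroianni--Totik but avoids all the boundary analysis you list as the ``principal obstacle.''
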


Theorem~\ref{thm:general} is deduced from Theorem~\ref{thm:trig} via
the standard device of defining a $2\pi$-periodic weight function
$W$ from a weight function $w$ on $[-1,1]$ by
$W(\theta):=w\left(\cos\theta\right)|\sin\theta|$.
Theorem~\ref{thm:trig} admits the following heuristic
interpretation: A trigonometric polynomial of degree $n$ `sees' a
doubling weight $W$ at a resolution of $\frac{1}{n}$, as made
precise by Mastroianni and Totik in \cite[Theorem
3.1]{MastroianniTotik1} (see also
Theorem~\ref{thm:MastroianniTotik1} below). Thus one may hope to
obtain a trigonometric quadrature of degree $n$ for $W$ by placing
$n$ nodes at $n$ roughly equi-distanced locations along the unit
circle, in which case each node should, in a sense, replace $W$ in a
neighborhood of size $\frac{1}{n}$ around it. In particular, the
weight of each node should roughly equal the integral of $W$ in that
neighborhood, so that the minimal weight would be of order
$\int_{-\pi}^{\pi} W(\theta)d\theta/R^{\trig}_W(n)$. Such a
trigonometric quadrature may then be converted into a Chebyshev-type
trigonometric quadrature with $N\approx R^{\trig}_W(n)$ nodes by
replacing each of the $n$ nodes by a cluster of nodes of weight
$\int_{-\pi}^{\pi} W(\theta)d\theta/N$, roughly preserving the
weight at each location.

The paper is organized as follows. In Section \ref{sec:prelim} we
list the ingredients we will use in our proofs: a proposition of
Kane and some facts about doubling weight functions.
Theorem~\ref{thm:general} and Theorem~\ref{thm:trig} are proved in
Section \ref{sec:proof_thm}. In Section~\ref{sec:non_doubling} we
remark upon extensions to non-doubling weight functions, providing a
simple upper bound on the minimal number of nodes required in a
Chebyshev-type quadrature and discussing some explicit examples. In
Section~\ref{sec:discussion} we discuss some open problems for
future research.

Throughout the paper we adopt the following policy regarding
constants. The constants $C(L)$ and $c(L)$ will always denote
positive constants whose value depends only on $L$. The
values of these constants will be allowed to change from line to
line, even within the same calculation, with the value of $C(L)$
increasing and the value of $c(L)$ decreasing. We similarly use $C$
and $c$ to denote positive absolute constants whose value may change
from line to line.

\paragraph*{Acknowledgements} We thank Lev Buhovski, Daniel M. Kane, Eli Levin, Mikhail Sodin and Sasha Sodin for fruitful
discussions.

\section{Preliminaries}\label{sec:prelim}

In this section we gather several existing results which we rely
upon in our proofs.

For a non-negative integer $n$, let $\mathcal{T}_n$ denote the set
of real trigonometric polynomials of degree at most $n$, i.e., the
linear space spanned by the $2n+1$ functions
$\{1,\sin\theta,\cos\theta,\sin 2\theta,\cos 2\theta,\ldots,\sin
n\theta,\cos n\theta\}$. Let $\mathcal{T}^{+}_n$ be the subset of
$\mathcal{T}_n$ of non-negative trigonometric polynomials which are
not identically zero.

\subsubsection*{Kane's bound on the size of Chebyshev-type
quadratures}

The following proposition is a restriction of Proposition~20 in
\cite{Kane}, adapted to our setup. For completeness, a proof is
provided in Appendix~\ref{sec:appendix_a}.

\begin{propos}\cite[Proposition 20]{Kane}\label{propos:Kane}
Let $W$ be a $2\pi$-periodic weight function and let $n$ be a
positive integer. Then for every integer $N$ satisfying that
$$N>\frac{\int_{-\pi}^{\pi} W(\theta)d\theta}{2}\sup_{p\in \mathcal{T}^{+}_n}\frac{\int_{-\pi}^{\pi}\left|p'(\theta)\right|d\theta}{\int_{-\pi}^{\pi} p(\theta)W(\theta)d\theta}
$$
there is a Chebyshev-type trigonometric quadrature of degree $n$ for
$W$ with $N$ nodes. Equivalently,
$$\overbar{N}^{\trig}_W(n)\leq\left\lfloor\frac{\int_{-\pi}^{\pi} W(\theta)d\theta}{2}\sup_{p\in \mathcal{T}^{+}_n}\frac{\int_{-\pi}^{\pi}\left|p'(\theta)\right|d\theta}{\int_{-\pi}^{\pi} p(\theta)W(\theta)d\theta}\right\rfloor+1.
$$
\end{propos}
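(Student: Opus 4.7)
The plan is to produce, for every integer $N$ strictly above the stated threshold, a Chebyshev-type trigonometric quadrature of degree $n$ for $W$ with exactly $N$ nodes. The strategy, following Kane \cite{Kane}, is topological: reformulate existence as zero-finding for a continuous map on the node-configuration torus, and then exploit both the cone structure of $\mathcal{T}^+_n$ and a quantitative single-node motion estimate to preclude any separating obstruction.

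After rescaling so that $\int_{-\pi}^{\pi} W\,d\theta=N$, the quadrature condition becomes $\sum_{i=1}^N p(\theta_i)=\int pW$ for every $p\in\mathcal{T}_n$. Since $p\equiv 1$ is automatic from the normalization, this amounts to $2n$ linear conditions, captured by a continuous map $\Phi\colon\mathbb{T}^N\to V^*$ with $V=\mathcal{T}_n/\mathbb{R}$, given by $\Phi(\vec\theta)(p)=\sum_i p(\theta_i)-\int pW$. The goal is $0\in\Phi(\mathbb{T}^N)$. A preliminary cone/separation observation shows $0\in\overline{\mathrm{conv}}\,\Phi(\mathbb{T}^N)$ unconditionally: for any $p\in V$ with representative normalized so that $\min p=0$, concentrating all nodes at a zero of $p$ yields $\Phi(\vec\theta)(p)=-\int pW<0$, while placing all nodes at a point where $p$ attains its maximum yields $\Phi(\vec\theta)(p)=N\max p-\int pW\geq0$. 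The real work is promoting $0$ from the closed convex hull to the image itself.

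For that promotion the core estimate is: as a single node $\theta_i$ sweeps the circle while the others are fixed, the value $\sum_j p(\theta_j)$ varies over an interval of length $\max p-\min p$, which by periodicity of $p$ is at most $\tfrac{1}{2}\int|p'|$. Couple this with a topological degree argument on $\Phi$ (or, equivalently, a one-node-at-a-time path-lifting argument between two configurations whose images straddle the putative separating hyperplane) to obtain the dichotomy: either $0\in\Phi(\mathbb{T}^N)$, in which case we are done, or there is a separating $p\in\mathcal{T}^+_n$ forcing the numerical inequality $\int pW\leq\tfrac{N}{2}\int|p'|$. Undoing the normalization $\int W=N$ rewrites this as $N\geq\tfrac{\int W}{2}\cdot\tfrac{\int|p'|}{\int pW}$, contradicting the hypothesis on $N$ and leaving only the first alternative.

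The main obstacle is the topological step: making rigorous, on the symmetric configuration space $\mathbb{T}^N/S_N$, that the gap between the closed convex hull (which always contains $0$) and the image itself is bridged precisely by the single-node oscillation bound with the constant $\tfrac12$. This is where Kane's degree/index computation does the essential work, and where the periodicity-driven halving of the total variation enters; the remaining bookkeeping, namely the floor in the statement and the fact that the argument succeeds uniformly for every $N$ above the threshold, is straightforward.
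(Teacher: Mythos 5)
Your outline correctly identifies the two ingredients that drive the result --- a topological zero-finding formulation and the fact that the oscillation of a periodic $p$ is at most $\tfrac12\int|p'|$, which is indeed the source of the constant $\tfrac12$ --- but the step you yourself label ``the main obstacle'' is precisely the content of the proposition, and you have not supplied it. Two concrete problems. First, your claimed dichotomy (``either $0\in\Phi(\mathbb{T}^N)$ or there is a separating $p\in\mathcal{T}^+_n$ with $\int pW\le \tfrac N2\int|p'|$'') cannot come from a separating-hyperplane argument: $\Phi(\mathbb{T}^N)$ is compact and connected but not convex, and, as you note, $0$ always lies in its closed convex hull, so no hyperplane separates $0$ from the image; a genuinely topological mechanism is required, and ``a degree argument on $\Phi$'' is not one until you exhibit a map whose boundary behaviour you can verify and whose degree you can compute. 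Second, your single-node sweep estimate controls the variation of $\sum_j p(\theta_j)$ as one node moves, but the inequality actually needed is a simultaneous statement about all $N$ nodes on each boundary piece of the relevant parameter set, and it is not explained how the one-node-at-a-time bound aggregates to that.

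The paper's proof (Appendix~\ref{sec:appendix_a}) fills exactly this gap with a construction you would need to reproduce. A compactness argument (Lemma~\ref{lem:analysis}) replaces the circle by a \emph{finite} set $S$ such that every nonzero $q\in\mathcal{T}_n$ with $\int_{-\pi}^{\pi}qW=0$ satisfies $2N\max_{x\in S}q(x)>\int_{-\pi}^{\pi}|q'|$. One then forms the polytope $Q=\mathrm{conv}\{E(x):x\in S\}\subset\mathbb{R}^{2n}$, where $E(x)$ records the centered evaluations, shows $0\in\mathrm{int}\,Q$, and constructs continuous node-placement maps $f_1,\dots,f_N:Q\to[-\pi,\pi)$ (Lemma~\ref{lem:geometry}) so that on a facet with vertices $E(x_{i_1}),\dots,E(x_{i_{2n}})$ all nodes lie in $[x_{i_1},x_{i_{2n}}]$ with at most one node strictly between consecutive $x_{i_k}$'s. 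Since the facet's defining polynomial $q$ equals $1$ at each $x_{i_k}$, one gets
\begin{equation*}
N>\tfrac12\textstyle\int_{-\pi}^{\pi}|q'|\ge\sum_{j}\left|1-q\left(f_j(v)\right)\right|\ge N-\sum_{j}q\left(f_j(v)\right),
\end{equation*}
hence $u\cdot F(v)=\sum_j q(f_j(v))>0$ on every facet, and the Brouwer-based Lemma~\ref{lem:topology} produces $v$ with $F(v)=\sum_j E(f_j(v))=0$, i.e.\ the desired quadrature. Note that the domain of the topological argument is the target-space polytope $Q$, not the configuration torus $\mathbb{T}^N$; this reparametrization is what makes the boundary condition checkable, and it is the piece missing from your proposal.
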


We use this bound together with the classical $L^1$ version of
Bernstein's inequality (see, e.g., \cite[Vol. II, Theorem 3.16,
p.11]{Zygmund}),
\begin{equation}\label{eq:L_1_Bernstein}
\int_{-\pi}^{\pi}\left|p'(\theta)\right|d\theta\leq
n\int_{-\pi}^{\pi}|p(\theta)|d\theta,\quad p\in\mathcal{T}_n.
\end{equation}

\subsubsection*{Properties of doubling weight functions}

Given a $2\pi$-periodic weight function $W$ and a positive integer $n$,
define
\begin{equation}\label{eq:W_averaged}
W_n(x):=n\int_{x-\frac{1}{n}}^{x+\frac{1}{n}}W(\theta)d\theta,\quad
x\in\mathbb{R}.
\end{equation}
Mastroianni and Totik \cite{MastroianniTotik1} showed that the
result of integrating powers of trigonometric polynomials of degree
$n$ against a doubling weight $W$ remains unchanged, up to
constants, if $W$ is replaced by $W_n$. In our work we use only a
one-sided bound of this form and only in the special case when the
power is 1.

\begin{theorem}\cite[Special case of Theorem 3.1]{MastroianniTotik1}\label{thm:MastroianniTotik1}
Let $W$ be a doubling $2\pi$-periodic weight function with doubling
constant $L$. Then
\begin{gather*}\int_{-\pi}^{\pi} \left|p(\theta)\right|W_n(\theta)d\theta\leq C(L)\int_{-\pi}^{\pi}
\left|p(\theta)\right|W(\theta)d\theta,\quad
p\in\mathcal{T}_n.\end{gather*}
\end{theorem}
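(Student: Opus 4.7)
My plan is to prove
\[
\int_{-\pi}^{\pi} |p(\theta)|\, W_n(\theta)\, d\theta \leq C(L) \int_{-\pi}^{\pi} |p(\theta)|\, W(\theta)\, d\theta
\]
by combining Bernstein's inequality, which transfers information about $|p|$ across intervals of length $1/n$, with the doubling property of $W$, which transfers information about integrals of $W$ across intervals of comparable size. By Fubini's theorem and $2\pi$-periodicity, the left-hand side equals $\int_{-\pi}^{\pi} W(\phi)\, A(\phi)\, d\phi$, where $A(\phi) := n \int_{\phi-1/n}^{\phi+1/n} |p(\theta)|\, d\theta$; note that $A(\phi) \leq \max_{|\theta-\phi|\leq 1/n} |p(\theta)|$.

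Next I would partition $[-\pi,\pi]$ into arcs $\{J_k\}$ of length of order $1/n$ and let $J_k^{*} \supseteq J_k$ be a fixed constant enlargement (by a factor of, say, $8$) chosen large enough that $[\phi - 1/n, \phi + 1/n] \subseteq J_k^{*}$ for every $\phi \in J_k$. Setting $M_k := \max_{J_k^{*}} |p|$, one has $A(\phi) \leq M_k$ for all $\phi \in J_k$. If $\theta_k^{*} \in J_k^{*}$ attains this maximum, then Bernstein's inequality $\|p'\|_\infty \leq n\|p\|_\infty$ gives $|p(\theta)| \geq M_k / 2$ throughout an interval $I_k$ of length of order $1/n$ around $\theta_k^{*}$, which may be taken to sit inside $J_k^{*}$ once the enlargement is chosen with enough slack. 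In particular,
\[
M_k \int_{I_k} W(\theta)\, d\theta \leq 2 \int_{I_k} |p(\theta)|\, W(\theta)\, d\theta.
\]

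Applying the doubling hypothesis on $W$ a bounded number of times (with a bound depending only on $L$) then yields $\int_{J_k} W \leq C(L) \int_{I_k} W$, since both $I_k$ and $J_k$ are contained in $J_k^{*}$ and each has length of order $1/n$. Combining these estimates,
\[
\int_{J_k} W(\phi)\, A(\phi)\, d\phi \leq M_k \int_{J_k} W \leq C(L)\, M_k \int_{I_k} W \leq 2 C(L) \int_{J_k^{*}} |p(\theta)|\, W(\theta)\, d\theta,
\]
and summing over $k$ using the bounded overlap of the $J_k^{*}$ yields the desired bound.

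The main obstacle I anticipate is the doubling step $\int_{J_k} W \leq C(L) \int_{I_k} W$: because the ``good set'' $I_k$ coming from Bernstein may lie asymmetrically within $J_k^{*}$, one has to iterate the defining doubling inequality around the center of $I_k$ rather than around the center of $J_k$, and verify that after a bounded number of doublings the resulting interval covers $J_k$. This is handled by making the enlargement $J_k^{*}$ sufficiently wide relative to $J_k$ (and relative to $1/n$), which keeps the number of iterations bounded independently of $n$ and keeps the constant dependent only on $L$.
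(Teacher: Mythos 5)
The paper does not actually prove this statement: it is quoted as a special case of Theorem~3.1 of Mastroianni and Totik \cite{MastroianniTotik1}, so there is no internal proof to compare against. Judged on its own terms, your proposal has a genuine gap at its central step. The Fubini reduction to $\int W(\phi)A(\phi)\,d\phi$ and the comparison $\int_{J_k}W_n\le C(L)\int_{J_k}W$ via doubling are fine (modulo a harmless factor of $2$ in $A(\phi)\le\max$), but the claim that Bernstein's inequality $\|p'\|_\infty\le n\|p\|_\infty$ forces $|p|\ge M_k/2$ on an interval of length of order $1/n$ around the point where $M_k=\max_{J_k^*}|p|$ is attained is not valid: Bernstein controls $|p'|$ by the \emph{global} supremum of $|p|$, not by the local maximum $M_k$, so it only yields such an interval of length of order $M_k/(n\|p\|_\infty)$, which is far shorter than $1/n$ when $M_k\ll\|p\|_\infty$.

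This is not a repairable technicality; the asserted conclusion is itself false. Take $p(\theta)=\prod_{j=1}^{n}\sin\frac{\theta-a_j}{2}\in\mathcal{T}_{n/2}$ with $k$ of the zeros equally spaced in $J_k=[0,\tfrac1n]$ (spacing $\varepsilon=\tfrac{1}{(k-1)n}$) and the remaining zeros far away. Near the cluster, $|p(\theta)|\approx A\prod_j|\theta-j\varepsilon|$ with $A$ essentially constant, so $\max_{J_k^*}|p|$ is attained at the boundary of $J_k^*$, where $\frac{d}{d\theta}\log|p|\approx\sum_j(\theta-j\varepsilon)^{-1}\asymp kn$; hence $\{|p|\ge M_k/2\}\cap J_k^*$ has measure of order $\frac1{kn}\ll\frac1n$, and adding more slack to the enlargement only moves the maximum to the new boundary without curing this. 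Your interval $I_k$ then has length $\asymp\frac1{kn}$, the step $\int_{J_k}W\le C(L)\int_{I_k}W$ requires $\asymp\log k$ doublings, and the constant degenerates as $k\to\infty$. Indeed, the intermediate inequality $M_k\int_{J_k}W\le C(L)\int_{J_k^*}|p|W$ already fails for $W\equiv1$ in this example: the left side is $\asymp M_k/n$ while the right side is $\asymp M_k/(kn)$, because replacing the average $A(\phi)$ by the local maximum $M_k$ loses a factor of $k$. The missing ingredient is precisely the hard polynomial-inequality content of Mastroianni--Totik's theorem, namely a bound of Marcinkiewicz/large-sieve type allowing one to pass from $\sum_j(\max_{I_j}|p|)\int_{I_j}W$ back to $C(L)\int|p|W$; this cannot be extracted from the global Bernstein inequality alone.
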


The following simple lemma allows to transfer results from the
trigonometric to the algebraic setting.
\begin{lemma}\label{lem:w_to_W}
Let $w$ be a weight function on $[-1,1]$. Define a $2\pi$-periodic
weight function $W$ by
\begin{equation}\label{eq:W_from_w_def}
W(\theta):=w(\cos\theta)|\sin\theta|,\quad \theta\in \mathbb{R}.
\end{equation}
If $w$ is doubling with a doubling constant $L$ then $W$ is also
doubling with a doubling constant $C(L)$.
\end{lemma}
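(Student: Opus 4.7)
The plan is to transfer the problem from the circle to the interval $[-1,1]$ via the substitution $t = \cos\theta$, which for $0 \le \alpha \le \beta \le \pi$ identifies $\int_\alpha^\beta W(\theta)\,d\theta = \int_{\cos\beta}^{\cos\alpha} w(t)\,dt$; combined with the evenness $W(-\theta)=W(\theta)$ and $2\pi$-periodicity of $W$, this lets us reformulate doubling estimates for $W$ as doubling-type estimates for $w$. After reducing to $a \in [0,\pi]$ by periodicity and evenness, I would split the argument according to the size of $\delta$ and the position of $a$, writing $A(a,\delta) := \int_{a-\delta}^{a+\delta} W(\theta)\,d\theta$ throughout.

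The clean case is $\delta$ small (say $\delta \le \delta_0$ for an absolute constant $\delta_0$) and $a \in [2\delta, \pi-2\delta]$, so both $[a-\delta,a+\delta]$ and $[a-2\delta,a+2\delta]$ sit inside $[0,\pi]$. The cosine substitution sends them to nested $t$-intervals $J' \subset I' \subset [-1,1]$ with $|I'|/|J'| = 2\cos\delta \le 2$. Writing $J' = [b-\eta, b+\eta]$ with $b = \cos a\cos\delta$ and $\eta = \sin a\sin\delta$, a short trigonometric computation (using $|\cot a| \le C/\delta$ on $[2\delta,\pi-2\delta]$) yields $I' \subset [b-3\eta, b+3\eta]$; two applications of the doubling of $w$ give $\int_{I'} w \le L^2\int_{J'} w$, which translates back to $A(a,2\delta) \le L^2 A(a,\delta)$. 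The large-$\delta$ case $\delta \ge \delta_0$ is handled separately by showing that both $A(a,\delta)$ and $A(a,2\delta)$ are comparable, up to constants depending only on $L$, to $\int_{-\pi}^\pi W\,d\theta = 2\int_{-1}^1 w(t)\,dt$: the upper bound is immediate from $2\pi$-periodicity, and the lower bound follows by transferring a sufficiently long sub-arc of $[a-\delta,a+\delta]\cap[0,\pi]$ to a $t$-interval and using the standard iterated-doubling bound $\int_{c-\eta}^{c+\eta} w \ge c(L,\eta)\int_{-1}^1 w$ valid for $\eta$ bounded below and $c \in [-1,1]$.

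The delicate case is $\delta$ small with $a$ near $0$ (the case near $\pi$ is analogous, with $t=-1$ replacing $t=1$). Using evenness to fold the negative part of the $\theta$-interval onto $[0,\pi]$ and substituting, one expresses $A(a,\delta)$ and $A(a,2\delta)$ as explicit sums of integrals of $w$ of two types: \emph{ordinary} integrals $\int_{\cos(a+\delta)}^{\cos(a-\delta)} w$ on $t$-intervals bounded away from $1$, and \emph{boundary} integrals $\int_{\cos\gamma}^1 w$ reaching $t=1$. Since $1-\cos(k\gamma) \le k^2(1-\cos\gamma)$, a constant number of applications of the doubling of $w$ around $t=1$ handles all boundary-to-boundary comparisons $\int_{\cos(k\gamma)}^1 w \le C(L)\int_{\cos\gamma}^1 w$ for bounded $k$.

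I expect the main obstacle to be the sub-case $a \in [\delta,2\delta]$: here $A(a,\delta) = \int_{\cos(a+\delta)}^{\cos(a-\delta)} w$ is purely an ordinary integral disjoint from a neighbourhood of $t=1$, while $A(a,2\delta)$ contains a genuine boundary contribution $2\int_{\cos(2\delta-a)}^1 w$, so one must bound near-$1$ mass of $w$ by mass on an interval not touching $1$. I would resolve this by applying the doubling of $w$ centered at $2\cos(a-\delta)-1$ with radius $1-\cos(a-\delta)$: the key inequality $1-\cos(a-\delta) \le 2\sin a\sin\delta$ (valid for $a \ge \delta$) ensures that the smaller doubling ball $[3\cos(a-\delta)-2,\cos(a-\delta)]$ sits inside $[\cos(a+\delta),\cos(a-\delta)]$, so that a single application of doubling transports $\int_{\cos(a-\delta)}^1 w$ into $A(a,\delta)$ at cost $L$.
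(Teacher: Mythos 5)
Your strategy is sound and, carried out carefully, yields a complete proof, but it is genuinely different from the paper's. The paper avoids your case analysis entirely: for arbitrary $u$ and $\delta<\pi$ it sets $a(u,\delta)=\min_{|t|\le\delta}\cos(u+t)$ and $b(u,\delta)=\max_{|t|\le\delta}\cos(u+t)$, sandwiches $\int_{u-\delta}^{u+\delta}W$ between $\int_{a(u,\delta)}^{b(u,\delta)}w$ and $4\int_{a(u,\delta)}^{b(u,\delta)}w$ (the factor $4$ absorbing the multiplicity of $\cos$), and then proves the single uniform estimate $b(u,2\delta)-a(u,2\delta)\le 4^3\bigl(b(u,\delta)-a(u,\delta)\bigr)$ by iterating the identity $\cos(u+4t)-\cos u=2\cos t\,\bigl(\cos(u+3t)-\cos(u+t)\bigr)$ three times. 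Since the two ranges are nested, this gives $[a(u,2\delta),b(u,2\delta)]$ inside a $2^7$-fold dilate of $[a(u,\delta),b(u,\delta)]$, hence a cost of $L^7$, and every position of $u$ --- interior, near $\pm 1$, or straddling a fold point of $\cos$ --- is handled in one stroke; only $\delta\ge\pi$ is split off. Your route pays for the absence of such an identity with a three-way case split and separate trigonometric estimates, but each step is elementary, the cases are exhaustive, and you correctly isolate $a\in[\delta,2\delta]$ as the only genuinely delicate configuration. What the paper's argument buys is uniformity and brevity; what yours buys is transparency about where the geometry of $\cos$ near $\pm1$ actually enters.

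One detail in the sub-case $a\in[\delta,2\delta]$ needs repair as written. The boundary term of $A(a,2\delta)$ is $2\int_{\cos(2\delta-a)}^{1}w$, and whenever $a<3\delta/2$ one has $2\delta-a>a-\delta$, so $[\cos(2\delta-a),1]\supsetneq[\cos(a-\delta),1]$: transporting $\int_{\cos(a-\delta)}^{1}w$ into $A(a,\delta)$ does not cover the required term, and a boundary-to-boundary comparison at $t=1$ fails here because $(1-\cos(2\delta-a))/(1-\cos(a-\delta))$ blows up as $a\downarrow\delta$. The fix stays inside your toolkit --- split $\int_{\cos(2\delta-a)}^{1}w$ at $\cos(a-\delta)$ into an ordinary piece plus $\int_{\cos(a-\delta)}^{1}w$, or note directly that $1-\cos(2\delta-a)\le(2\delta-a)^2/2\le\delta^2/2$ while the interval of $A(a,\delta)$ has length $2\sin a\sin\delta\ge c\,\delta^2$ and right endpoint within $\delta^2/2$ of $1$, so $[\cos(2\delta-a),1]$ lies in a bounded dilate of that interval about its midpoint --- but the step should be stated. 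Relatedly, the containment $[3\cos(a-\delta)-2,\cos(a-\delta)]\subseteq[\cos(a+\delta),\cos(a-\delta)]$ is equivalent to $1-\cos(a-\delta)\le\sin a\sin\delta$ with constant $1$, not the constant $2$ you quote; it does hold on this range (use $1-\cos(a-\delta)\le(a-\delta)^2/2\le(a-\delta)\delta/2\le a\delta/2$), and in any case a factor-$2$ overshoot only costs one more application of doubling, so this is harmless.
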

\begin{proof}
For every real $u$ and positive $\delta$, let
\begin{align*}
a(u,\delta):&=\min\{\cos(u+t)\mid-\delta\leq t\leq\delta\},\\
b(u,\delta):&=\max\{\cos(u+t)\mid-\delta\leq t\leq\delta\},
\end{align*}
and note that, for every real $t$,
\begin{gather*}
\cos(u+4t)-\cos u=2\cos t(\cos(u+3t)-\cos(u+t)).
\end{gather*}
Therefore $\cos u-a(u,4\delta)\leq 2(b(u,3\delta)-a(u,3\delta))$ and
$b(u,4\delta)-\cos u\leq 2(b(u,3\delta)-a(u,3\delta))$, whence
$b(u,4\delta)-a(u,4\delta)\leq 4(b(u,3\delta)-a(u,3\delta))$.
Applying this bound iteratively three times yields that
\begin{gather*}
b(u,2\delta)-a(u,2\delta)\leq
4^3\left(b\left(u,(3/4)^32\delta\right)-a\left(u,(3/4)^32\delta\right)\right)\leq
4^3\left(b\left(u,\delta\right)-a\left(u,\delta\right)\right),
\end{gather*}
from which it follows that
\begin{gather*}
\left[a(u,2\delta),b(u,2\delta)\right]\subseteq\left[\frac{a(u,\delta)+b(u,\delta)}{2}-2^7\,\frac{b(u,\delta)-a(u,\delta)}{2},\,\frac{a(u,\delta)+b(u,\delta)}{2}+2^7\,\frac{b(u,\delta)-a(u,\delta)}{2}\right].
\end{gather*}
For $\delta<\pi$ we get, using the doubling property of $w$ (and
continuing to interpret $w$ as $0$ outside $[-1,1]$),
\begin{multline*}
\int_{u-2\delta}^{u+2\delta}W(\theta)d\theta=\int_{u-2\delta}^{u+2\delta}w(\cos\theta)|\sin\theta|d\theta\leq
 4\int_{a(u,2\delta)}^{b(u,2\delta)}w(t)dt\leq\\
\leq 4\int_{\frac{a(u,\delta)+b(u,\delta)}{2}-2^7\,\frac{b(u,\delta)-a(u,\delta)}{2}}^{\frac{a(u,\delta)+b(u,\delta)}{2}+2^7\,\frac{b(u,\delta)-a(u,\delta)}{2}}w(t)dt\leq 4L^7\int_{\frac{a(u,\delta)+b(u,\delta)}{2}-\frac{b(u,\delta)-a(u,\delta)}{2}}^{\frac{a(u,\delta)+b(u,\delta)}{2}+\frac{b(u,\delta)-a(u,\delta)}{2}}w(t)dt=\\
=4L^7\int_{a(u,\delta)}^{b(u,\delta)}w(t)dt\leq
4L^7\int_{u-\delta}^{u+\delta}w(\cos\theta)|\sin\theta|d\theta=
4L^7\int_{u-\delta}^{u+\delta}W(\theta)d\theta.
\end{multline*}
Furthermore, it is straightforward that for $\delta\ge \pi$ we have
\begin{equation*}
  \int_{u-2\delta}^{u+2\delta}W(\theta)d\theta\le
  3\int_{u-\delta}^{u+\delta}W(\theta)d\theta,
\end{equation*}
completing the proof of the lemma.
\end{proof}

Lastly, we require the following immediate property of doubling
weight functions.

\begin{lemma}\label{lem:W_to_W_n}
If $W$ is a doubling $2\pi$-periodic weight function with doubling constant $L$, then for every real $x,y$ and $\delta>0$,
$$\int_{y-\delta}^{y+\delta}W(\theta)d\theta\leq L\left(1+\frac{|x-y|}{\delta}\right)^{\log_2L}\int_{x-\delta}^{x+\delta}W(\theta)d\theta.$$
\end{lemma}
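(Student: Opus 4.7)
The plan is to iterate the doubling inequality a logarithmic number of times in order to compare the two integrals, after first enlarging the interval around $x$ until it contains the interval around $y$.

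First I would choose the smallest non-negative integer $k$ with $2^k\delta\geq\delta+|x-y|$, i.e., $k:=\lceil\log_2(1+|x-y|/\delta)\rceil$. For this $k$ we have $[y-\delta,y+\delta]\subseteq[x-2^k\delta,x+2^k\delta]$, so by non-negativity of $W$,
\[
\int_{y-\delta}^{y+\delta}W(\theta)d\theta\leq\int_{x-2^k\delta}^{x+2^k\delta}W(\theta)d\theta.
\]

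Next I would apply the doubling inequality $k$ times at the center $a=x$ with successive radii $\delta,2\delta,\ldots,2^{k-1}\delta$ to obtain
\[
\int_{x-2^k\delta}^{x+2^k\delta}W(\theta)d\theta\leq L^k\int_{x-\delta}^{x+\delta}W(\theta)d\theta.
\]
Since $k\leq\log_2(1+|x-y|/\delta)+1$, one has $L^k\leq L\cdot L^{\log_2(1+|x-y|/\delta)}=L\bigl(1+|x-y|/\delta\bigr)^{\log_2 L}$, and combining the two displays gives the claimed bound.

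There is no real obstacle here: the doubling constant $L$ governs both the per-step blowup and the number of steps needed to cover the distance $|x-y|$, and matching these gives the power $\log_2 L$ automatically. The only minor point to note is the cosmetic factor $L$ out front, which comes from the ceiling in the choice of $k$; one could absorb it into the exponent at the cost of a slightly uglier statement.
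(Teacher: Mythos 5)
Your proposal is correct and is essentially identical to the paper's proof: the paper also sets $r=\lceil\log_2(1+|x-y|/\delta)\rceil$, uses the containment $[y-\delta,y+\delta]\subseteq[x-2^r\delta,x+2^r\delta]$, iterates the doubling inequality $r$ times, and bounds $L^r\le L\left(1+\frac{|x-y|}{\delta}\right)^{\log_2 L}$. Nothing further is needed.
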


\begin{proof}
Let
$r:=\left\lceil\log_2\left(1+\frac{|x-y|}{\delta}\right)\right\rceil$.
Then
$\left[y-\delta,y+\delta\right]\subseteq\left[x-2^r\delta,x+2^r\delta\right]$
and hence, by the doubling property,
\begin{gather*}\int_{y-\delta}^{y+\delta}W(\theta)d\theta\leq\int_{x-2^r\delta}^{x+2^r\delta}W(\theta)d\theta\leq L^r\int_{x-\delta}^{x+\delta}W(\theta)d\theta\le L\left(1+\frac{|x-y|}{\delta}\right)^{\log_2L}\int_{x-\delta}^{x+\delta}W(\theta)d\theta.\qedhere\end{gather*}
\end{proof}

\section{Proofs}\label{sec:proof_thm}

For every integer $m\ge 0$, Let $F_m$ be the following variant of
the Fej\'er kernel,
\begin{equation}\label{eq:Fejer_def}F_m(\theta):=\left(\frac{\sin\frac{(2m+1)\theta}{2}}{(2m+1)\sin\frac{\theta}{2}}\right)^2,\end{equation}
which is a non-negative trigonometric polynomial of degree $2m$. We
frequently use that
\begin{align}
&\text{$F_m(\theta)\leq\min\left\{1,\left(\frac{\pi}{(2m+1)\theta}\right)^2\right\}$
for every $-\pi\leq\theta\leq \pi$}\label{eq:F_m_upper_bound}\\
&\text{$F_m$ is even, and decreasing on
$\left[0,\frac{2\pi}{2m+1}\right]$,}\label{eq:F_m_even_decreasing}\\
&\left(\frac{2}{\pi}\right)^2\leq
F_m\left(\frac{\pi}{2m+1}\right)\le
F_1\left(\frac{\pi}{3}\right)=\left(\frac{2}{3}\right)^2\text{ for
every }m\geq 1,\label{eq:F_m_at_pi_over_2m+1}
\end{align}
where the inequalities in \eqref{eq:F_m_at_pi_over_2m+1} follow by
noting that
$F_m\left(\frac{\pi}{2m+1}\right)=\left((2m+1)\sin\frac{\pi}{2(2m+1)}\right)^{-2}$
decreases with $m$. The following lemma shows that when integrating
powers of $F_m$ against doubling weight functions the main
contribution comes from a small neighborhood of the origin.

\begin{lemma}\label{lemma:Fejer_W_n}
Let $W$ be a doubling $2\pi$-periodic weight function with doubling constant $L$. For every integers $k,\ell,m$ such that $0<|k|\leq m$, $\ell\geq 5\log_2 L$,
$$\int_{\frac{2k-1}{2m+1}\pi}^{\frac{2k+1}{2m+1}\pi} F_m(\theta)^{\ell}W(\theta)d\theta\leq\left(\frac{2}{3|k|}\right)^{\ell}\int_{-\frac{\pi}{2m+1}}^{\frac{\pi}{2m+1}}W(\theta)d\theta.$$
\end{lemma}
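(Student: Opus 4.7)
The plan is to combine a pointwise upper bound on $F_m$ on the $k$-th Fej\'er interval $I_k:=[\frac{(2k-1)\pi}{2m+1},\frac{(2k+1)\pi}{2m+1}]$ with the doubling comparison of Lemma~\ref{lem:W_to_W_n}. The hypothesis $\ell\geq 5\log_2 L$ is tuned so that surplus factors coming from the pointwise bound on $F_m^{\ell}$ swallow the polynomial-in-$|k|$ loss incurred when shifting the integration interval from $I_k$ to the one around the origin.

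The first step is to show $F_m(\theta)\leq\left(\frac{2}{3|k|}\right)^2$ on $I_k$ for every $|k|\geq 1$. By evenness from \eqref{eq:F_m_even_decreasing} it suffices to treat $k\geq 1$. For $k=1$ the interval $I_1$ crosses the right edge $\frac{2\pi}{2m+1}$ of the monotonicity region, so I would split it: on $[\frac{\pi}{2m+1},\frac{2\pi}{2m+1}]$ the monotonicity in \eqref{eq:F_m_even_decreasing} together with \eqref{eq:F_m_at_pi_over_2m+1} gives $F_m(\theta)\leq F_m(\frac{\pi}{2m+1})\leq(2/3)^2$, and on $[\frac{2\pi}{2m+1},\frac{3\pi}{2m+1}]$ the bound \eqref{eq:F_m_upper_bound} gives $F_m(\theta)\leq 1/4<(2/3)^2$. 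For $k\geq 2$ the bound \eqref{eq:F_m_upper_bound} alone suffices: $F_m(\theta)\leq 1/(2k-1)^2\leq 4/(9k^2)$, where the last inequality rearranges to $3k\leq 2(2k-1)$ and holds precisely for $k\geq 2$. Raising to the $\ell$-th power yields $\int_{I_k}F_m^{\ell}W\leq\left(\frac{2}{3|k|}\right)^{2\ell}\int_{I_k}W$.

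Next, Lemma~\ref{lem:W_to_W_n} applied with $y=\frac{2k\pi}{2m+1}$, $x=0$, and $\delta=\frac{\pi}{2m+1}$ (so $|x-y|/\delta=2|k|$) bounds $\int_{I_k}W$ above by $L(1+2|k|)^{\log_2 L}\int_{-\pi/(2m+1)}^{\pi/(2m+1)}W$. Combining, the lemma follows once $\left(\frac{2}{3|k|}\right)^{\ell}L(1+2|k|)^{\log_2 L}\leq 1$ is verified for every $|k|\geq 1$ under $\ell\geq 5\log_2 L$. Using $1+2|k|\leq 3|k|$, this rearranges to $(3|k|)^{\ell-\log_2 L}\geq 2^{\ell}L$; since $\ell\geq\log_2 L$, the left side is smallest at $|k|=1$, where the inequality becomes $(3/2)^{\ell}\geq L\cdot 3^{\log_2 L}=L^{1+\log_2 3}$, equivalently $\ell\log_2(3/2)\geq(1+\log_2 3)\log_2 L$. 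The choice $\ell\geq 5\log_2 L$ works because $5\log_2(3/2)\approx 2.92$ exceeds $1+\log_2 3\approx 2.58$, with a small margin to spare. The main obstacle is nothing conceptual but precisely this numerical balance: one has to pick the pointwise bound on $F_m$ tight enough that the $\ell-\log_2 L$ excess factors of $\frac{2}{3|k|}$ dominate the doubling loss $(3|k|)^{\log_2 L}$, which is what dictates the constant $5$ in the hypothesis.
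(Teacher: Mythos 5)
Your proof is correct and follows essentially the same route as the paper's: the pointwise bound $F_m\le(2/(3|k|))^2$ on the $k$-th Fej\'er interval (with the same case split at $k=1$ versus $k\ge 2$), followed by Lemma~\ref{lem:W_to_W_n} with $\delta=\pi/(2m+1)$, and a final numerical check that $\ell\ge 5\log_2 L$ absorbs the doubling loss. The only difference is cosmetic: the paper verifies the closing inequality by rewriting $(2/(3k))^{5\log_2 L}L(3k)^{\log_2 L}$ as $\left((8/9)^2 k^{-4}\right)^{\log_2 L}\le 1$, whereas you take logarithms and compare $5\log_2(3/2)$ with $1+\log_2 3$; both are valid.
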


\begin{proof}
With no loss of generality assume $k$ is positive. We have
\begin{equation*}
  F_m(\theta)\leq\left(\frac{2}{3k}\right)^2,\quad\;
  \frac{2k-1}{2m+1}\pi\leq\theta\leq\frac{2k+1}{2m+1}\pi.
\end{equation*}
When $\theta\ge \frac{2\pi}{2m+1}$ this follows from
\eqref{eq:F_m_upper_bound} whereas when $\frac{\pi}{2m+1}\le
\theta\le \frac{2\pi}{2m+1}$ this follows from
\eqref{eq:F_m_even_decreasing} and \eqref{eq:F_m_at_pi_over_2m+1}.
Therefore, by
Lemma \ref{lem:W_to_W_n},
\begin{gather*}\int_{\frac{2k-1}{2m+1}\pi}^{\frac{2k+1}{2m+1}\pi}  F_m(\theta)^{\ell}W(\theta)d\theta\leq\left(\frac{2}{3k}\right)^{2\ell}\int_{\frac{2k-1}{2m+1}\pi}^{\frac{2k+1}{2m+1}\pi}W(\theta)d\theta\leq\left(\frac{2}{3k}\right)^{2\ell}L\left(2k+1\right)^{\log_2L}\int_{-\frac{\pi}{2m+1}}^{\frac{\pi}{2m+1}}W(\theta)d\theta,
\end{gather*}
and the lemma follows since for every $k\geq 1$ and $\ell\geq
5\log_2 L$,
\begin{gather*}
\left(\frac{2}{3k}\right)^{\ell}L\left(2k+1\right)^{\log_2L}\leq\left(\frac{2}{3k}\right)^{5\log_2L}L\left(3k\right)^{\log_2L}=\left(\left(\frac{8}{9}\right)^2\frac{1}{k^4}\right)^{\log_2L}\le1.
\qedhere\end{gather*}
\end{proof}

We proceed to prove our main theorems.

\begin{proof}[Proof of Theorem~\ref{thm:trig}]
Denote $I:=\int_{-\pi}^{\pi}W(\theta)d\theta$. Recall the definition of $W_n$ from \eqref{eq:W_averaged}. Note that  $R^{\trig}_W(n)W_n(\theta)\geq I\cdot n$ for every real $\theta$. For every
$p\in \mathcal{T}^{+}_n$, by \eqref{eq:L_1_Bernstein} and Theorem
\ref{thm:MastroianniTotik1},
\begin{gather*}I\int_{-\pi}^{\pi}\left|p'(\theta)\right|d\theta\leq I\cdot n\int_{-\pi}^{\pi}p(\theta)d\theta\leq R^{\trig}_W(n)\int_{-\pi}^{\pi}p(\theta)W_n(\theta)d\theta\leq C(L)R^{\trig}_W(n)\int_{-\pi}^{\pi}p(\theta)W(\theta)d\theta
\end{gather*}
and the upper bound of the theorem follows by Proposition
\ref{propos:Kane}.

To get the lower bound we need to show that if $\theta_1\leq
\theta_2\leq\ldots\leq \theta_N$ are the nodes of a Chebyshev-type
trigonometric quadrature of degree $n$ for $W$, then for every real
$x$, $\int_{x-\frac{1}{n}}^{x+\frac{1}{n}}W(\theta)d\theta\geq
c(L)I/N$.
With no loss of generality we may assume that all nodes
are in $[-\pi,\pi]$ and that $x=0$.

Let $\ell$ be a positive integer and let $m:=\lfloor
n/2(\ell+1)\rfloor$. We first show that if
$\ell\ge 5\log_2(\pi^2L)$ then the interval
$\left[-\frac{\pi}{2m+1},\frac{\pi}{2m+1}\right]$ contains at least
one node of the Chebyshev-type trigonometric quadrature. This is
obvious if $m=0$ so we assume $m\ge 1$ in the following calculation.
Let $p$ be the polynomial
$p(\theta):=F_m(\theta)^{\ell}\left(F_m(\theta)-F_m(\frac{\pi}{2m+1})\right)$.
We note that
\begin{equation}\label{eq:p_negative}
  p(\theta)\le 0,\quad
  \theta\in[-\pi,\pi]\setminus\left[-\frac{\pi}{2m+1}, \frac{\pi}{2m+1}\right]
\end{equation}
as in this range,
\begin{equation*}
  F_m(\theta) =
  \left(\frac{\sin\frac{(2m+1)\theta}{2}}{(2m+1)\sin\frac{\theta}{2}}\right)^2
  \le \left(\frac{1}{(2m+1)\sin\frac{\pi}{2(2m+1)}}\right)^2 =
  F_m\left(\frac{\pi}{2m+1}\right).
\end{equation*}
Therefore, if there is no node of the Chebyshev-type trigonometric
quadrature in the interval
$\left[-\frac{\pi}{2m+1},\frac{\pi}{2m+1}\right]$, then, using that
$\deg p=2m(\ell+1)\leq n$,
$$\int_{-\pi}^{\pi}p(\theta)W(\theta)d\theta=\frac{I}{N}\sum_{j=1}^N p(\theta_j)\leq 0.$$
However, using \eqref{eq:F_m_even_decreasing} and the doubling
property,
\begin{align*}
\int_{-\frac{\pi}{2m+1}}^{\frac{\pi}{2m+1}}p(\theta)W(\theta)d\theta&\geq\int_{-\frac{\pi}{2(2m+1)}}^{\frac{\pi}{2(2m+1)}}p(\theta)W(\theta)d\theta\geq\\
&\geq\left(F_m\left(\frac{\pi}{2(2m+1)}\right)-F_m\left(\frac{\pi}{2m+1}\right)\right)F_m\left(\frac{\pi}{2(2m+1)}\right)^{\ell}\int_{-\frac{\pi}{2(2m+1)}}^{\frac{\pi}{2(2m+1)}}W(\theta)d\theta\geq\\
&\ge
\frac{1}{3}\left(\frac{8}{\pi^2}\right)^{\ell}\frac{1}{L}\int_{-\frac{\pi}{2m+1}}^{\frac{\pi}{2m+1}}W(\theta)d\theta.
\end{align*}
Moreover, by \eqref{eq:p_negative}, \eqref{eq:F_m_upper_bound} and
using Lemma~\ref{lemma:Fejer_W_n}, if $\ell\ge 5\log_2 L$ then for
every integer $k$ such that $0<|k|\leq m$,
\begin{gather*}-\int_{\frac{2k-1}{2m+1}\pi}^{\frac{2k+1}{2m+1}\pi}p(\theta)W(\theta)d\theta\leq F_m\left(\frac{\pi}{2m+1}\right)\int_{\frac{2k-1}{2m+1}\pi}^{\frac{2k+1}{2m+1}\pi} F_m(\theta)^{\ell}W(\theta)d\theta\leq\left(\frac{2}{3|k|}\right)^{\ell}\int_{-\frac{\pi}{2m+1}}^{\frac{\pi}{2m+1}}W(\theta)d\theta,\end{gather*}
whence, for $\ell\geq 5\log_2(\pi^2L)\geq\max\{5\log_2L,\log_2(\pi^2L)/\log_2(12/\pi^2)\}$,
\begin{gather*}
\int_{-\pi}^{\pi}p(\theta)W(\theta)d\theta=\sum_{k=-m}^m\int_{\frac{2k-1}{2m+1}\pi}^{\frac{2k+1}{2m+1}\pi}p(\theta)W(\theta)d\theta\geq\left(\frac{1}{3L}\left(\frac{8}{\pi^2}\right)^{\ell}-2\left(\frac{2}{3}\right)^{\ell}\sum_{k=1}^m\frac{1}{k^2}\right)\int_{-\frac{\pi}{2m+1}}^{\frac{\pi}{2m+1}}W(\theta)d\theta>0.
\end{gather*}
Therefore, if $\ell\ge  5\log_2(\pi^2L)$ then there is at least one node of the Chebyshev-type
trigonometric quadrature in the interval
$\left[-\frac{\pi}{2m+1},\frac{\pi}{2m+1}\right]$. Let
$\theta_{j_0}$ be one such node.

Now, as $\deg F_m^{\ell}=2m\ell\leq n$, it follows, using
\eqref{eq:F_m_even_decreasing}, \eqref{eq:F_m_at_pi_over_2m+1} and
Lemma~\ref{lemma:Fejer_W_n}, that for $\ell\ge 5\log_2(\pi^2L)$,
\begin{multline*}
\frac{I}{N}\left(\frac{2}{\pi}\right)^{2\ell}\leq\frac{I}{N}F_m\left(\frac{\pi}{2m+1}\right)^{\ell}\leq\frac{I}{N}F_m(\theta_{j_0})^{\ell}\leq\frac{I}{N}\sum_{j=1}^N F_m(\theta_j)^{\ell}=\int_{-\pi}^{\pi}F_m(\theta)^{\ell}W(\theta)d\theta=\\
=\sum_{k=-m}^m\int_{\frac{2k-1}{2m+1}\pi}^{\frac{2k+1}{2m+1}\pi}
F_m(\theta)^{\ell}W(\theta)d\theta\leq\left(1+2\left(\frac{2}{3}\right)^{\ell}\sum_{k=1}^m\frac{1}{k^\ell}\right)\int_{-\frac{\pi}{2m+1}}^{\frac{\pi}{2m+1}}W(\theta)d\theta\le
3\int_{-\frac{\pi}{2m+1}}^{\frac{\pi}{2m+1}}W(\theta)d\theta.
\end{multline*}
Taking $\ell = \lceil 5\log_2(\pi^2L)\rceil$, the last inequality and the
doubling property imply that
$\int_{-1/n}^{1/n}W(\theta)d\theta\geq c(L)I/N$,
as required.
\end{proof}

\begin{proof}[Proof of Theorem \ref{thm:general}]
Let $W$ be the $2\pi$-periodic weight function defined by
$W(\theta):=w(\cos\theta)|\sin\theta|$. It is easy to verify that if
$(\theta_j)_{j=1}^N$ are the nodes of a Chebyshev-type trigonometric
quadrature of degree $n$ for $W$, then $(\cos\theta_j)_{j=1}^N$ are
the nodes of a Chebyshev-type quadrature of degree $n$ for $w$, and
conversly, if $(t_j)_{j=1}^N$ are the nodes of a Chebyshev-type
quadrature of degree $n$ for $w$, then $(\arccos
t_j)_{j=1}^N\cup(-\arccos t_j)_{j=1}^N$ are the nodes of a
Chebyshev-type trigonometric quadrature of degree $n$ for $W$ (where
the union is interpreted as a multiset union). Therefore
\begin{equation}\label{eq:N_N^trig}
\frac{1}{2}N^{\trig}_W(n)\leq N_w(n)\leq\overbar{N}_w(n)\leq
\overbar{N}^{\trig}_W(n).
\end{equation}
By Lemma \ref{lem:w_to_W}, $W$ is doubling with doubling constant $C(L)$. Therefore by Theorem \ref{thm:trig},
\begin{equation}\label{eq:trig}
c(L)R^{\trig}_W(n)\leq
N^{\trig}_W(n)\leq\overbar{N}^{\trig}_W(n)\leq C(L)R^{\trig}_W(n).
\end{equation}
For every $u$ let
\begin{align*}
a(u):&=\min\{\cos(u+\delta)\mid-1/n\leq\delta\leq 1/n\},\\
b(u):&=\max\{\cos(u+\delta)\mid-1/n\leq\delta\leq 1/n\},
\end{align*}
then
\begin{equation}\label{eq:integral_cos_comparison}
\int_{a(u)}^{b(u)}w(t)dt\leq\int_{u-\frac{1}{n}}^{u+\frac{1}{n}}w(\cos\theta)|\sin\theta|d\theta\leq
2\int_{a(u)}^{b(u)}w(t)dt.
\end{equation}
Recall the definition of $\Delta_n$ from the statement of the
theorem. For every real $u$ and $-1/n\leq\delta\leq 1/n$,
\begin{align*}
|\cos u-\cos(u+\delta)|&=|(1-\cos\delta)\cos u+\sin\delta\sin u|\leq (1-\cos\delta)+|\sin\delta|\cdot|\sin u|\leq\\
&\leq \frac{\delta^2}{2}+|\delta|\cdot|\sin u|\leq\frac{1}{n}\left(\frac{1}{n}+|\sin u|\right)=\Delta_n(\cos u).
\end{align*}
Therefore,
\begin{equation*}
[a(u),\,b(u)]\subseteq\left[\cos u-\Delta_n(\cos u),\,\cos
u+\Delta_n(\cos u)\right],
\end{equation*}
whence, using
\eqref{eq:integral_cos_comparison},
\begin{equation*}
\int_{u-\frac{1}{n}}^{u+\frac{1}{n}}W(\theta)d\theta=\int_{u-\frac{1}{n}}^{u+\frac{1}{n}}w(\cos\theta)|\sin\theta|d\theta\leq
2\int_{a(u)}^{b(u)}w(t)dt\leq 2\int_{\cos u-\Delta_n(\cos u)}^{\cos
u+\Delta_n(\cos u)}w(t)dt,
\end{equation*}
where here and below we again interpret $w$ as $0$ outside $[-1,1]$.
Therefore, since $\int_{-\pi}^{\pi}
W(\theta)d\theta=\int_{-\pi}^{\pi}
w(\cos\theta)|\sin\theta|d\theta=2\int_{-1}^1 w(t)dt$,
\begin{equation}\label{eq:I_trig<I}
R^{\trig}_W(n)\geq R_w(n).
\end{equation}
For every real $u$,
\begin{align*}
\Delta_n(\cos u)&=\frac{1}{n}\left(\frac{1}{n}+|\sin u|\right)\leq \frac{1}{n}\left(\frac{1}{n}|\cos u|+\left(1 + \frac{1}{n}\right)|\sin u|\right)\le\\
&\le \frac{\pi^2}{2}\left(1-\cos\left(\frac{1}{n}\right)\right)|\cos u|+\pi\sin\left(\frac{1}{n}\right)|\sin u|\le\\
&\le \frac{15}{2}\left[\left(1-\cos\left(\frac{1}{n}\right)\right)|\cos u|+\sin\left(\frac{1}{n}\right)|\sin u|\right]=\\
&=\frac{15}{2}\max\left\{\left|\cos
u-\cos\left(u-\frac{1}{n}\right)\right|,\,\left|\cos
u-\cos\left(u+\frac{1}{n}\right)\right|\right\}\leq
\frac{15}{2}\left( b(u)-a(u)\right).
\end{align*}
Therefore,
\begin{multline*}\left[\cos u-\Delta_n(\cos u),\,\cos u+\Delta_n(\cos u)\right]\subseteq\\
\subseteq\left[\frac{a(u)+b(u)}{2}-16\frac{b(u)-a(u)}{2},\,\frac{a(u)+b(u)}{2}+16\frac{b(u)-a(u)}{2}\right],
\end{multline*}
whence, using the doubling property of $w$ and
\eqref{eq:integral_cos_comparison},
\begin{multline*}
\int_{\cos u-\Delta_n(\cos
u)}^{\cos u+\Delta_n(\cos u)}w(t)dt\leq \int_{\frac{a(u)+b(u)}{2}-16\frac{b(u)-a(u)}{2}}^{\frac{a(u)+b(u)}{2}+16\frac{b(u)-a(u)}{2}}w(t)dt\leq\\
\leq L^4\int_{a(u)}^{b(u)}w(t)dt\leq
L^4\int_{u-\frac{1}{n}}^{u+\frac{1}{n}}w(\cos\theta)|\sin\theta|d\theta=
L^4\int_{u-\frac{1}{n}}^{u+\frac{1}{n}}W(\theta)d\theta .
\end{multline*}
Thus, using again that $\int_{-\pi}^{\pi}
W(\theta)d\theta=2\int_{-1}^1 w(t)dt$,
\begin{equation}\label{eq:I_trig>I}
R^{\trig}_W(n)\leq 2L^4 R_w(n).
\end{equation}
The theorem follows by combining \eqref{eq:N_N^trig},
\eqref{eq:trig}, \eqref{eq:I_trig<I} and \eqref{eq:I_trig>I}.
\end{proof}

\section{Additional results for non-doubling weight functions}\label{sec:non_doubling}

In this section we briefly remark on Chebyshev-type quadratures for
non-doubling weight functions. We shall write $|A|$ to denote the
Lebesgue measure of a Lebesgue measurable set $A$ in the real line.

\subsection{A general upper bound}

Here we provide a simple upper bound on the minimal number of nodes
required in a Chebyshev-type quadrature which is applicable for
\emph{any} weight function. The advantage of this bound is its
generality and the fact that it is sharp in some cases, e.g., for
generalized Jacobi weight functions on $[-1,1]$ (see
\eqref{eq:generalized_Jacobi_weights}) as it has the same order of
magnitude as the bound given in Theorem \ref{thm:general}. Its
disadvantage is that it is not sharp in general, e.g., for weight
functions vanishing on an interval (see also \cite{Forster, Peled})
or for the example given in Section~\ref{sec:non-doubling_example}
below.

\begin{theorem}\label{thm:upper_bound}
Let $n$ be a positive integer and let $W$ be a $2\pi$-periodic
weight function. Then, for every $0<\eta<1$, and every measurable
$D\subset[-\pi,\pi]$ such that $|D|\leq(1-\eta)^2\frac{2\pi}{2n+1}$,
$$\overbar{N}^{\trig}_W(n)\leq  \frac{1}{\eta}\cdot\frac{\int_{-\pi}^{\pi} W(\theta)d\theta}{\essinf_{\theta\in [-\pi,\pi]\setminus D}W(\theta)}n.$$
\end{theorem}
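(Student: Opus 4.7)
The plan is to combine Kane's Proposition~\ref{propos:Kane} with the $L^1$ Bernstein inequality~\eqref{eq:L_1_Bernstein} and the pointwise lower bound $W\ge m\,\mathbf{1}_{[-\pi,\pi]\setminus D}$, where $m:=\essinf_{\theta\in[-\pi,\pi]\setminus D}W(\theta)$, to reduce the theorem to a single concentration inequality for non-negative trigonometric polynomials of degree $n$: namely,
$$\int_D p\,d\theta\le(1-\eta)^2\int_{-\pi}^\pi p\,d\theta\qquad\text{for every }p\in\mathcal{T}^{+}_n.$$
Once this is shown, the lower bound on $W$ gives $\int pW\,d\theta\ge m\,\eta(2-\eta)\int p\,d\theta\ge m\eta\int p\,d\theta$ (using $2-\eta\ge1$), while Proposition~\ref{propos:Kane} combined with~\eqref{eq:L_1_Bernstein} and $|p|=p$ on $\mathcal{T}^{+}_n$ gives
$$\overbar{N}^{\trig}_W(n)\le\Big\lfloor\tfrac{n\int W\,d\theta}{2}\sup_{p\in\mathcal{T}^{+}_n}\tfrac{\int p\,d\theta}{\int pW\,d\theta}\Big\rfloor+1\le\tfrac{n\int W\,d\theta}{2m\eta}+1,$$
which matches $\tfrac{1}{\eta}\cdot\tfrac{\int W\,d\theta}{m}\cdot n$ in the informative regime $\tfrac{n\int W\,d\theta}{m\eta}\ge 2$.

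The concentration inequality, which is the heart of the argument, I would establish via the Fej\'er--Riesz factorization. Writing $p(\theta)=|Q(e^{i\theta})|^2$ with $Q(z)=\sum_{k=0}^n c_kz^k$ an algebraic polynomial of degree at most $n$, Cauchy--Schwarz yields $|Q(e^{i\theta})|^2\le(n+1)\sum_{k=0}^n|c_k|^2$ uniformly in $\theta$, and Parseval gives $\sum_{k=0}^n|c_k|^2=\tfrac{1}{2\pi}\int_{-\pi}^\pi|Q(e^{i\theta})|^2\,d\theta=\tfrac{1}{2\pi}\int_{-\pi}^\pi p\,d\theta$. Together these produce the sharp Nikolskii-type inequality
$$\|p\|_{L^\infty([-\pi,\pi])}\le\frac{n+1}{2\pi}\int_{-\pi}^\pi p\,d\theta,\qquad p\in\mathcal{T}^{+}_n.$$
Integrating over $D$ and using $|D|\le(1-\eta)^2\tfrac{2\pi}{2n+1}$ delivers
$$\int_D p\,d\theta\le|D|\cdot\|p\|_\infty\le(1-\eta)^2\cdot\tfrac{n+1}{2n+1}\int_{-\pi}^\pi p\,d\theta\le(1-\eta)^2\int_{-\pi}^\pi p\,d\theta,$$
as required.

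The main obstacle is producing the Nikolskii-type bound with the sharp constant $\tfrac{n+1}{2\pi}$: a signed trigonometric polynomial of degree $n$ only satisfies $\|p\|_\infty\le Cn\|p\|_1$ for some absolute $C$, which is insufficient to match $|D|$ and yield the $(1-\eta)^2$ factor. The Fej\'er--Riesz factorization is the tool that converts the non-negativity of $p$ into the clean identity $\|p\|_\infty=\|Q\|_\infty^2$, after which Cauchy--Schwarz applied to the $n+1$ coefficients of $Q$ does the rest. The remaining work---invoking Kane's proposition, tracking $m$, and absorbing the additive $+1$ in the informative regime---is entirely routine.
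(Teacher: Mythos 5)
Your proof is correct and follows the paper's overall skeleton (Kane's Proposition~\ref{propos:Kane} plus the $L^1$ Bernstein inequality \eqref{eq:L_1_Bernstein}, reduced to showing that a non-negative $p\in\mathcal{T}^{+}_n$ cannot concentrate too much of its mass on $D$), but you prove the key concentration step by a genuinely different route. The paper's Lemma~\ref{lemma:general} uses the exact equispaced quadrature formula for $\mathcal{T}_{2n}$ applied to $p^2$, together with $\sum a_j^2\le(\sum a_j)^2$ for non-negative $a_j$, to get the $L^2$--$L^1$ Nikolskii bound $\int p^2\le\frac{2n+1}{2\pi}(\int p)^2$, and then Cauchy--Schwarz over $D$ gives $\int_D p\le(1-\eta)\int p$. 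You instead use Fej\'er--Riesz factorization, Cauchy--Schwarz on the coefficients of $Q$, and Parseval to get the sharp $L^\infty$--$L^1$ bound $\|p\|_\infty\le\frac{n+1}{2\pi}\int p$, and then the trivial estimate $\int_D p\le|D|\,\|p\|_\infty$ yields the slightly stronger $\int_D p\le(1-\eta)^2\frac{n+1}{2n+1}\int p\le(1-\eta)^2\int p$. Both are standard ways of exploiting non-negativity; yours buys a marginally better constant (equivalently, it would tolerate $|D|\le(1-\eta)\frac{2\pi}{n+1}$), while the paper's avoids invoking Fej\'er--Riesz and stays entirely within real trigonometric polynomials. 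One small point you wave at rather than check: the absorption of the ``$+1$'' from Proposition~\ref{propos:Kane} is not merely valid ``in the informative regime'' --- it always holds, since $\int_{-\pi}^{\pi}W\ge m\,|[-\pi,\pi]\setminus D|\ge m\cdot 2\pi\frac{2n}{2n+1}\ge 4m\ge 4m\eta$, whence $\frac{n\int W}{2m\eta}\ge 2n\ge 2$; a sentence to this effect would close the argument completely.
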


We also provide an analogue of the theorem for weight functions on
$[-1,1]$.

\begin{cor}\label{cor:upper_bound}
Let $n$ be a positive integer and let $w$ be a weight function on
$[-1,1]$. Then, for every $0<\eta<1$, and every measurable
$D\subset[-1,1]$ such that
$\int_D\frac{dt}{\sqrt{1-t^2}}\leq(1-\eta)^2\frac{\pi}{2n+1}$,
$$\overbar{N}_w(n)\leq  \frac{2}{\eta}\cdot\frac{\int_{-1}^1 w(t)dt}{\essinf_{t\in [-1,1]\setminus D}\sqrt{1-t^2}w(t)}n.$$
\end{cor}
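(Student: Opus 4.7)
The plan is to deduce the corollary from Theorem~\ref{thm:upper_bound} by the same trigonometric reduction used in the proof of Theorem~\ref{thm:general}. Specifically, define the $2\pi$-periodic weight $W(\theta):=w(\cos\theta)|\sin\theta|$ and, given the exceptional set $D\subset[-1,1]$ from the hypothesis, define the pull-back
\begin{equation*}
\tilde D := \{\theta\in[-\pi,\pi]:\cos\theta\in D\}.
\end{equation*}
The inequality $\overbar{N}_w(n)\le\overbar{N}^{\trig}_W(n)$ from \eqref{eq:N_N^trig} reduces the corollary to an application of Theorem~\ref{thm:upper_bound} to $W$ and $\tilde D$, so it remains to verify the size hypothesis on $\tilde D$ and to compare the two essential infima.

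For the measure of $\tilde D$, I would split the integral over $[-\pi,0]$ and $[0,\pi]$ and change variables $t=\cos\theta$ on each half, using $d\theta = -dt/\sqrt{1-t^2}$, to obtain
\begin{equation*}
|\tilde D| \;=\; 2\int_D\frac{dt}{\sqrt{1-t^2}} \;\le\; 2(1-\eta)^2\frac{\pi}{2n+1} \;=\; (1-\eta)^2\frac{2\pi}{2n+1},
\end{equation*}
which is exactly the condition required in Theorem~\ref{thm:upper_bound}. For the essential infimum, note that for $\theta\in[-\pi,\pi]\setminus\tilde D$ we have $\cos\theta\in[-1,1]\setminus D$ (up to a null set of endpoints) and $|\sin\theta|=\sqrt{1-\cos^2\theta}$, so $W(\theta)=\sqrt{1-\cos^2\theta}\,w(\cos\theta)$ and therefore
\begin{equation*}
\essinf_{\theta\in[-\pi,\pi]\setminus\tilde D}W(\theta) \;\ge\; \essinf_{t\in[-1,1]\setminus D}\sqrt{1-t^2}\,w(t).
\end{equation*}

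Finally, using $\int_{-\pi}^{\pi}W(\theta)d\theta=2\int_{-1}^1 w(t)dt$ (as in the proof of Theorem~\ref{thm:general}), Theorem~\ref{thm:upper_bound} applied to $W$ and $\tilde D$ yields
\begin{equation*}
\overbar{N}_w(n)\;\le\;\overbar{N}^{\trig}_W(n)\;\le\;\frac{1}{\eta}\cdot\frac{2\int_{-1}^1 w(t)dt}{\essinf_{t\in[-1,1]\setminus D}\sqrt{1-t^2}\,w(t)}\cdot n,
\end{equation*}
which is the claimed inequality. There is no genuine obstacle here; the only point to be careful about is the measure-comparison via the change of variables, including the factor of $2$ coming from the two-to-one nature of $\cos$ on $[-\pi,\pi]$, which is exactly what matches the $(1-\eta)^2\frac{\pi}{2n+1}$ hypothesis to the $(1-\eta)^2\frac{2\pi}{2n+1}$ hypothesis of the trigonometric statement.
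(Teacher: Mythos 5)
Your proposal is correct and follows essentially the same route as the paper: define $W(\theta)=w(\cos\theta)|\sin\theta|$, pull $D$ back to the symmetric set $\tilde D$, verify $|\tilde D|=2\int_D dt/\sqrt{1-t^2}$ and the matching of the essential infima and total masses, and apply Theorem~\ref{thm:upper_bound} together with $\overbar{N}_w(n)\le\overbar{N}^{\trig}_W(n)$ from \eqref{eq:N_N^trig}. The only cosmetic difference is that the paper states the essential infima as equal where you settle for the inequality $\ge$, which suffices for the bound.
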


Theorem~\ref{thm:upper_bound} follows by combining Proposition
\ref{propos:Kane}, \eqref{eq:L_1_Bernstein} and the following lemma.
The lemma and its proof are inspired by \cite[Lemma 23]{Kane}.

\begin{lemma}\label{lemma:general} Let $n$ be a positive integer and let $W$ be a $2\pi$-periodic weight function. Suppose $D\subset[-\pi,\pi]$ is a
measurable set such that $|D|\leq(1-\eta)^2\frac{2\pi}{2n+1}$ for
some $0<\eta<1$. Then
$$\int_{-\pi}^{\pi} p(\theta)W(\theta)d\theta\geq \eta\essinf_{\theta\in[-\pi,\pi]\setminus D}W(\theta)\cdot\int_{-\pi}^{\pi}p(\theta)d\theta,\quad p\in \mathcal{T}^{+}_n.$$
\end{lemma}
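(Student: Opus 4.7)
The plan is to reduce the inequality to a polynomial estimate that eliminates $W$. Writing $m := \essinf_{\theta\in[-\pi,\pi]\setminus D} W(\theta)$, the trivial bound $\int_{-\pi}^{\pi} pW\,d\theta \geq m\int_{[-\pi,\pi]\setminus D} p\,d\theta$ shows that it suffices to prove $\int_{[-\pi,\pi]\setminus D} p \geq \eta\int_{-\pi}^{\pi} p$, or equivalently $\int_D p \leq (1-\eta)\int_{-\pi}^{\pi} p$, for every $p\in\mathcal{T}_n^+$.

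The key ingredient for this polynomial inequality is the Nikolskii-type bound
\[
\|p\|_\infty \leq \frac{n+1}{2\pi}\int_{-\pi}^{\pi} p(\theta)\,d\theta, \qquad p\in\mathcal{T}_n^+,
\]
whose short proof goes via the Fej\'er-Riesz representation $p(\theta) = |Q(e^{i\theta})|^2$ with $Q(z)=\sum_{k=0}^n c_k z^k$ an algebraic polynomial of degree at most $n$: Cauchy-Schwarz gives $\|p\|_\infty \leq (n+1)\sum_k |c_k|^2$, while Parseval gives $\int_{-\pi}^{\pi} p = 2\pi \sum_k |c_k|^2$. Combining $\int_D p \leq |D|\cdot \|p\|_\infty$ with this bound and the hypothesis $|D| \leq (1-\eta)^2\frac{2\pi}{2n+1}$ yields
\[
\int_D p \leq \frac{(n+1)|D|}{2\pi}\int_{-\pi}^{\pi} p \leq \frac{(n+1)(1-\eta)^2}{2n+1}\int_{-\pi}^{\pi} p \leq (1-\eta)^2\int_{-\pi}^{\pi} p,
\]
which is in fact stronger than the required $(1-\eta)\int p$, since $(1-\eta)^2 \leq 1-\eta$ for $\eta\in(0,1)$.

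The only non-trivial step is the Nikolskii-type inequality for non-negative trigonometric polynomials, which is a short classical computation via Fej\'er-Riesz and Cauchy-Schwarz. Everything else is bookkeeping with the measure hypothesis on $D$, so I do not expect a real obstacle. It is perhaps worth remarking that the hypothesis $|D| \leq (1-\eta)^2 \frac{2\pi}{2n+1}$ matches the natural scale $2\pi/(2n+1)$ governing the Fej\'er kernel $F_n$, at which the inequality is essentially tight.
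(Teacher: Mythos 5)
Your proof is correct, but its key technical step differs from the paper's. Both arguments make the same reduction: it suffices to show $\int_D p \le (1-\eta)\int_{-\pi}^{\pi}p$ for $p\in \mathcal{T}^{+}_n$, after which one restricts the integral of $pW$ to $[-\pi,\pi]\setminus D$ and uses the essential infimum there. The difference is in how the mass of $p$ on the small set $D$ is controlled. The paper bounds $\int_D p$ by Cauchy--Schwarz against the $L^2$ norm and then controls $\int_{-\pi}^{\pi}p(\theta)^2d\theta$ by $\frac{2n+1}{2\pi}\left(\int_{-\pi}^{\pi}p(\theta)d\theta\right)^2$ using the exact quadrature identity $\int_{-\pi}^{\pi}q(\theta)d\theta=\frac{2\pi}{2n+1}\sum_{j=0}^{2n}q\left(\frac{2j\pi}{2n+1}\right)$ for $q\in\mathcal{T}_{2n}$ together with the elementary inequality $\sum_j a_j^2\le\left(\sum_j a_j\right)^2$ for non-negative $a_j$. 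You instead prove the $L^\infty$--$L^1$ Nikolskii bound $\|p\|_\infty\le\frac{n+1}{2\pi}\int_{-\pi}^{\pi}p(\theta)d\theta$ via the Fej\'er--Riesz factorization, Cauchy--Schwarz on the coefficients, and Parseval, and then use the trivial estimate $\int_D p\le |D|\cdot\|p\|_\infty$. Your $L^\infty$ inequality is actually the stronger statement (it implies the paper's $L^2$ bound through $\|p\|_2^2\le\|p\|_\infty\|p\|_1$, with a slightly better constant, and it is attained by the Fej\'er kernel), and it yields the improved factor $(1-\eta)^2\frac{n+1}{2n+1}\le 1-\eta$. The paper's route has the advantage of being entirely elementary and self-contained, avoiding the Fej\'er--Riesz theorem; yours is shorter once that classical ingredient is granted.
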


\begin{proof}
It is easy to verify that
$$\int_{-\pi}^{\pi}q(\theta)d\theta=\frac{2\pi}{2n+1}\sum_{j=0}^{2n}q\left(\frac{2j}{2n+1}\pi\right),\quad q\in\mathcal{T}_{2n}.$$
Fix $p\in \mathcal{T}^{+}_n$. As
$\deg p\leq n$ and $p$ is non-negative,
\begin{align*}
\int_{-\pi}^{\pi} p(\theta)^2d\theta&=\frac{2\pi}{2n+1}\sum_{j=0}^{2n}p\left(\frac{2j}{2n+1}\pi\right)^2\leq\frac{2\pi}{2n+1}\left(\sum_{j=0}^{2n}p\left(\frac{2j}{2n+1}\pi\right)\right)^2=\\
&=\frac{2n+1}{2\pi}\left(\frac{2\pi}{2n+1}\sum_{j=0}^{2n}p\left(\frac{2j}{2n+1}\pi\right)\right)^2=\frac{2n+1}{2\pi}\left(\int_{-\pi}^{\pi}
p(\theta)d\theta\right)^2.
\end{align*}
Therefore, by the Cauchy-Schwartz inequality,
\begin{align*}\int_D p(\theta)d\theta&\leq\sqrt{\int_D d\theta}\cdot\sqrt{\int_D p(\theta)^2d\theta}\leq\sqrt{|D|}\cdot\sqrt{\int_{-\pi}^{\pi} p(\theta)^2d\theta}\leq\\
&\leq \sqrt{(1-\eta)^2\frac{2\pi}{2n+1}}\cdot
\sqrt{\frac{2n+1}{2\pi}}\int_{-\pi}^{\pi}
p(\theta)d\theta=(1-\eta)\int_{-\pi}^{\pi} p(\theta)d\theta.
\end{align*}
Hence
\begin{multline*}
\int_{-\pi}^{\pi}
p(\theta)W(\theta)d\theta\geq\int_{[-\pi,\pi]\setminus D}
p(\theta)W(\theta)d\theta\geq \essinf_{\theta\in [-\pi,\pi]\setminus D}W(\theta)\cdot\int_{[-\pi,\pi]\setminus D} p(\theta)d\theta\geq\\
\geq \eta\essinf_{\theta\in[-\pi,\pi]\setminus
D}W(\theta)\cdot\int_{-\pi}^{\pi} p(\theta)d\theta.
\qedhere\end{multline*}
\end{proof}
\begin{proof}[Proof of Corollary~\ref{cor:upper_bound}]
Let $W$ be the $2\pi$-periodic weight function defined by
$W(\theta):=w(\cos\theta)|\sin\theta|$ and set $\tilde{D}$ to be the
subset of $[-\pi,\pi]$ satisfying $-\tilde{D} = \tilde{D}$ and
$\cos(\tilde{D}) = D$. The corollary follows from
Theorem~\ref{thm:upper_bound} by observing that
$\overbar{N}_w(n)\leq \overbar{N}^{\trig}_W(n)$ by
\eqref{eq:N_N^trig}, that $|\tilde{D}| =
2\int_D\frac{dt}{\sqrt{1-t^2}}$, that $\essinf_{\theta\in
[-\pi,\pi]\setminus \tilde{D}}W(\theta) = \essinf_{t\in
[-1,1]\setminus D}\sqrt{1-t^2}w(t)$ and that $\int_{-\pi}^{\pi}
W(\theta)d\theta = 2\int_{-1}^1 w(t)dt$.
\end{proof}

\subsection{Exponentially vanishing weights}\label{sec:non-doubling_example}

In this section we study a family of non-doubling weight functions
which vanish as a stretched exponential at a point. For $\alpha>0$,
let $W_{\alpha}$ be the $2\pi$-periodic weight function defined for
$-\pi\leq\theta<\pi$ by
\begin{equation}\label{eq:W_2}W_{\alpha}(\theta):=\begin{cases}e^{-|\theta|^{-\alpha}}& \theta\neq 0\\ 0 &\theta=0\end{cases}.\end{equation}
Theorem~\ref{thm:trig} does not apply to this weight function as it
is not doubling. Theorem \ref{thm:upper_bound} applies, and yields
that $\overbar{N}^{\trig}_{W_{\alpha}}(n)\leq \exp(Cn^\alpha)$. As
it turns out, however, $\overbar{N}^{\trig}_{W_{\alpha}}(n)$ is
considerably smaller.

\begin{theorem}\label{thm:stretched_exponential}
Let $W_{\alpha}$ be the $2\pi$-periodic weight function defined for
$-\pi\leq\theta<\pi$ by \eqref{eq:W_2}. Then
\begin{equation*}
\exp\left(c(\alpha)n^{\frac{\alpha}{\alpha+1}}\right)\leq
N^{\trig}_{W_{\alpha}}(n) \leq
\overbar{N}^{\trig}_{W_{\alpha}}(n)\leq
\exp\left(C(\alpha)n^{\frac{\alpha}{\alpha+1}}\right),\quad n\ge 1,
\end{equation*}
where $C(\alpha),c(\alpha)$ are positive constants depending only on
$\alpha$.
\end{theorem}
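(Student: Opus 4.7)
The plan is to prove matching upper and lower bounds of order $\exp(\pm\Theta(n^{\alpha/(\alpha+1)}))$ by exploiting, in complementary ways, the test polynomial $p:=F_m^\ell\in\mathcal{T}_n^+$ with $m\asymp n^{1/(\alpha+1)}$ and $\ell\asymp n^{\alpha/(\alpha+1)}$, chosen so that $2m\ell\le n$. This choice is dictated by a Laplace-type analysis of the integrand $F_m(\theta)^\ell W_\alpha(\theta)$: the two competing exponents $\ell\log F_m(\theta)\approx -C\ell(m\theta)^2$ (for small $m\theta$) and $-|\theta|^{-\alpha}$ balance at $\theta^*\asymp n^{-1/(\alpha+1)}$, where the integrand attains its maximum value $\exp(-\Theta(n^{\alpha/(\alpha+1)}))$, and the peak width (governed by the second derivative) is $1/(m\sqrt{\ell})$.

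For the \emph{lower bound}, I would follow the structure of the proof of Theorem~\ref{thm:trig} with the indicated $m$ and $\ell$. Given a Chebyshev-type trigonometric quadrature $\theta_1,\ldots,\theta_N$ of degree $n$ for $W_\alpha$, the first step is to force some $\theta_{j_0}$ into $[-\pi/(2m+1),\pi/(2m+1)]$ using the polynomial $F_m^\ell(F_m-F_m(\pi/(2m+1)))$ of degree $2m(\ell+1)\le n$. The argument mirrors Section~\ref{sec:proof_thm}, except that the tail contributions over the intervals $[(2k-1)\pi/(2m+1),(2k+1)\pi/(2m+1)]$ for $k\ne 0$ are now controlled by the simple bound $\|W_\alpha\|_\infty\le 1$ rather than by Lemma~\ref{lemma:Fejer_W_n} (which relies on doubling and fails for $W_\alpha$); a direct saddle-point/Laplace estimate verifies that the central interval still dominates. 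Applying the quadrature identity to $F_m^\ell$ and using $F_m(\theta_{j_0})^\ell\ge(2/\pi)^{2\ell}$ from \eqref{eq:F_m_at_pi_over_2m+1},
\[\frac{I}{N}\left(\frac{2}{\pi}\right)^{2\ell}\le\int_{-\pi}^{\pi}F_m^\ell W_\alpha\,d\theta\le \exp(-c(\alpha)n^{\alpha/(\alpha+1)}),\]
where the right inequality is the Laplace estimate. Choosing the implicit constants in $m,\ell$ so that $2\ell\log(\pi/2)<c(\alpha)n^{\alpha/(\alpha+1)}$ yields $N\ge \exp(c'(\alpha)n^{\alpha/(\alpha+1)})$.

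For the \emph{upper bound}, direct application of Theorem~\ref{thm:upper_bound} yields only $\exp(Cn^\alpha)$, as noted in the paper. Instead I would invoke Proposition~\ref{propos:Kane} together with the $L^1$-Bernstein inequality \eqref{eq:L_1_Bernstein}, reducing the task to the uniform lower bound
\[\int_{-\pi}^{\pi}p(\theta)W_\alpha(\theta)\,d\theta \ge c(\alpha)\exp(-C(\alpha)n^{\alpha/(\alpha+1)})\int_{-\pi}^{\pi}p(\theta)\,d\theta,\qquad p\in\mathcal{T}_n^+,\]
any polynomial-in-$n$ factor being absorbed harmlessly into the exponent. For $\delta:=c(\alpha)n^{-1/(\alpha+1)}$ the pointwise bound $W_\alpha(\theta)\ge \exp(-\delta^{-\alpha})=\exp(-\Theta(n^{\alpha/(\alpha+1)}))$ on $\{|\theta|>\delta\}$ gives $\int pW_\alpha\ge\exp(-Cn^{\alpha/(\alpha+1)})\int_{|\theta|>\delta}p$, so it suffices to show that $\int_{|\theta|>\delta}p$ is not too much smaller than $\int p$: explicitly, that it exceeds $\exp(-C'(\alpha)n^{\alpha/(\alpha+1)})\int p$.

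\emph{Main obstacle.} The technical heart is precisely this non-concentration estimate, uniformly over $p\in\mathcal{T}_n^+$ at the scale $\delta\asymp n^{-1/(\alpha+1)}$, which is much larger than the $1/n$ resolution on which Nikolskii-type arguments operate effectively. The saddle-point computation for $p=F_m^\ell$ with the above parameters shows the target bound is tight (with polynomial slack), and suggests that $F_m^\ell$ is the worst case. Extending to arbitrary $p\in\mathcal{T}_n^+$ I would approach through the Fej\'er-Riesz factorization $p=|q|^2$ with $q$ a polynomial of degree $\le n$ in $e^{i\theta}$: the problem then becomes a spectral/concentration question for polynomials, controllable via Tur\'an-type or Christoffel-function estimates that limit how concentrated $|q|^2$ can be on a short arc relative to the degree of $q$. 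Carrying this through carefully is the main analytical burden.
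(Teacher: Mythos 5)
Your lower bound is essentially the paper's own argument: the same node-forcing polynomial $F_m(\theta)^{\ell}\bigl(F_m(\theta)-F_m(\frac{\pi}{2m+1})\bigr)$ with $m\asymp n^{1/(\alpha+1)}$, $\ell\asymp n^{\alpha/(\alpha+1)}$, tails controlled by $\|W_\alpha\|_\infty\le 1$, and then the quadrature identity applied to a power of $F_m$, with the integral $\int F_m^{\,r}W_\alpha$ bounded by explicitly maximizing $\theta^{-2r}e^{-|\theta|^{-\alpha}}$ (your ``Laplace estimate''). One bookkeeping caveat: the paper deliberately uses a \emph{separate, smaller} exponent $r$ for the counting polynomial $F_m^{\,r}$, because the constraint needed there (roughly $\frac{\pi}{2m+1}(2r/(e\alpha))^{1/\alpha}<\frac{2}{\pi}$, so that the integral loses against $F_m(\theta_{j_0})^r\ge(2/\pi)^{2r}$) is a different constant constraint from $2m(\ell+1)\le n$; using a single $\ell$ for both steps may force you to shrink its constant and re-check the node-forcing inequality. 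This is only a matter of constants, so I regard the lower bound as sound.

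The upper bound, however, has a genuine gap. Your reduction (Proposition~\ref{propos:Kane} plus \eqref{eq:L_1_Bernstein}, reducing everything to the non-concentration estimate $\int_{|\theta|>\delta}p\ge e^{-C(\alpha)n^{\alpha/(\alpha+1)}}\int_{-\pi}^{\pi}p$ for $p\in\mathcal{T}^{+}_n$ at scale $\delta\asymp n^{-1/(\alpha+1)}$) is exactly the paper's, and you correctly flag this estimate as the crux --- but you do not prove it. The paper closes it in one stroke with the Remez-type inequality for trigonometric polynomials (Theorem~\ref{thm:Remez}, from Borwein--Erd\'elyi): for $p\in\mathcal{T}_n$ with $M:=\max|p|$, the set where $|p|\ge Me^{-4ns}$ has measure at least $s$. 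Taking $s=3n^{-1/(\alpha+1)}$ and deleting the interval $(-n^{-1/(\alpha+1)},n^{-1/(\alpha+1)})$ of measure $2n^{-1/(\alpha+1)}$ leaves a set $A$ with $|A|\ge n^{-1/(\alpha+1)}$ on which simultaneously $p\ge Me^{-12n^{\alpha/(\alpha+1)}}$ and $W_\alpha\ge e^{-n^{\alpha/(\alpha+1)}}$; since $M\ge\frac{1}{2\pi}\int p$, the required bound follows immediately. Your proposed substitute --- Fej\'er--Riesz factorization combined with unspecified Tur\'an-type or Christoffel-function estimates --- is not carried out, and what it would have to deliver is precisely a Remez inequality; note also that the standard Christoffel-function bounds control concentration at the $1/n$ scale, not at the larger scale $n^{-1/(\alpha+1)}$ with the exponential-in-$n\delta$ loss you need. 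Without the Remez input (or an equivalent), the upper bound is not established.
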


We mention that the technique used for the proof of the theorem uses
similar ideas to the proof of Theorem~\ref{thm:trig} and may be
applicable for certain other weight functions. For instance, for the
$2\pi$-periodic weight functions $\widetilde{W}_{\alpha}$,
$\alpha>0$, defined for $0<|\theta|\le\pi$ by
$\widetilde{W}_{\alpha}(\theta):=\exp(-\exp(|\theta|^{-\alpha}))$ we
may show that
\begin{equation*}
\exp\left(c(\alpha)\frac{n}{(\log n)^{1/\alpha}}\right)\leq
N^{\trig}_{\widetilde{W}_{\alpha}}(n) \leq
\overbar{N}^{\trig}_{\widetilde{W}_{\alpha}}(n)\leq
\exp\left(C(\alpha)\frac{n}{(\log n)^{1/\alpha}}\right),\quad n\ge
1,
\end{equation*}
where $C(\alpha),c(\alpha)$ are positive constants depending only on
$\alpha$. It is also worth mentioning in this regard that it is
known \cite[Theorem 1.4]{Peled} that for weight functions $w$ on
$[-1,1]$ which are bounded we have $\overbar{N}_w(n)\le \exp(C(w)n)$
for a constant $C(w)$ depending only on $w$.

As a second remark we point out that the fact that the bounds
provided by Theorem~\ref{thm:stretched_exponential} for
$N^{\trig}_{W_{\alpha}}(n)$ and
$\overbar{N}^{\trig}_{W_{\alpha}}(n)$ are not as close to each other
as in Theorem~\ref{thm:trig} may be essential.
Indeed, as mentioned before, there is an example \cite{Forster} of a
weight function $w$ vanishing on an interval for which $N_w(n)$ is
linear in $n$ while $\overbar{N}_w(n)$ is exponential in $n$.

The proof of Theorem~\ref{thm:stretched_exponential} is obtained by
combining Proposition \ref{propos:Kane} with the following
Remez-type inequality for trigonometric polynomials.

\begin{theorem}\label{thm:Remez}\cite[Theorem 5.1.2]{Borwein-Erdelyi}
Let $p\in\mathcal{T}_n$ and denote
$M:=\max_{\theta\in\mathbb{R}}|p(\theta)|$. Then
$$|\{-\pi\leq\theta<\pi: |p(\theta)|\geq Me^{-4ns}\}|\geq s,\quad 0<s\leq\pi/2.$$
\end{theorem}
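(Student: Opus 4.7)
My plan is to first reformulate the inequality as a Remez-type bound and then reduce it to the classical algebraic Remez inequality on $[-1,1]$. By contrapositive, the desired inequality is equivalent to
\[
\|p\|_{[-\pi,\pi]}\leq e^{4n|F|}\|p\|_{[-\pi,\pi]\setminus F}
\]
for every $p\in\mathcal{T}_n$ and every measurable $F\subset[-\pi,\pi]$ with $|F|\leq\pi/2$. (Supposing $|E|<s$ for $E:=\{|p|\geq Me^{-4ns}\}$ and applying the above with $F=E$ would give $M\leq e^{4n|E|}Me^{-4ns}<M$, a contradiction.) I would prove this equivalent Remez inequality after normalizing $M=\|p\|_\infty=1$ and, by circular translation, assuming the maximum is attained at $\theta=0$.

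The main step will be an algebraic reduction. Decompose $p(\theta)=A(\cos\theta)+\sin\theta\cdot C(\cos\theta)$ with $A,C$ algebraic polynomials of degrees at most $n$ and $n-1$, and form
\[
R(x):=A(x)^2+(1-x^2)C(x)^2,
\]
a polynomial of degree $2n$. A direct expansion gives $R(\cos\theta)=\tfrac12\bigl(p(\theta)^2+p(-\theta)^2\bigr)$, so $0\leq R\leq 1$ on $[-1,1]$ with $R(1)=p(0)^2=1$. Setting $s:=|F|$ and $\tilde F:=\cos(F\cup(-F))\subset[-1,1]$, one observes that $|\tilde F|\leq s$ (as $\cos$ is $1$-Lipschitz and compresses Lebesgue measure on $[0,\pi]$) and $R(x)\leq\|p\|_{[-\pi,\pi]\setminus F}^2$ for $x\notin\tilde F$. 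Applying the classical algebraic Remez inequality (Borwein-Erd\'elyi, Theorem~5.1.1) to $R$ yields
\[
1=\|R\|_{[-1,1]}\leq T_{2n}\!\left(\tfrac{2+s}{2-s}\right)\|p\|_{[-\pi,\pi]\setminus F}^2,
\]
and using $T_N(\cosh t)=\cosh(Nt)\leq e^{Nt}$, the required bound follows once one verifies $\cosh^{-1}\!\bigl(\tfrac{2+s}{2-s}\bigr)\leq 4s$. A direct calculation shows this holds for $s$ above an absolute threshold $s_0$ (numerically, $s_0\approx 0.15$), giving the Remez bound in this range.

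For the complementary small-$s$ regime $s<s_0$ I would use the $L^\infty$ Bernstein inequality $\|p'\|_\infty\leq n$ directly: for $|\theta|\leq s$ one has $|p(\theta)|\geq 1-ns\geq e^{-4ns}$ whenever $ns\leq c$ for an absolute constant $c$, yielding an interval of length $2s$ on which $|p|\geq e^{-4ns}$. The hard part will be closing the intermediate gap $c/n<s<s_0$, where the algebraic reduction is too weak (the square-root loss in $T_{2n}$ gives only an $\exp(n\sqrt s)$ estimate) and Bernstein is too short-range. This gap can be bridged either by a dyadic iteration applying the Bernstein bound at nested scales around local extrema of $|p|$, or by a finer algebraic Remez-type estimate exploiting that $R$ attains its maximum precisely at the endpoint $x=1$ (where the $\cos$ change of variables converts the $\sqrt\sigma$ loss back into a linear-in-$s$ loss).
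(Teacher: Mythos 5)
The paper never proves this statement---it is imported verbatim as \cite[Theorem 5.1.2]{Borwein-Erdelyi} (the result is due to Erd\'elyi, and its proof is nontrivial)---so your attempt stands or falls on its own, and it falls: the gap you flag in the range $c/n<s<s_0$ is not a technicality but the entire content of the theorem. What you do establish is correct: the contrapositive reformulation, the identity $R(\cos\theta)=\tfrac12\bigl(p(\theta)^2+p(-\theta)^2\bigr)$ with $R=A^2+(1-x^2)C^2\in\mathcal{P}_{2n}$, the measure bound $|\tilde F|\le s$, and the two extreme regimes. But in between, your pipeline proves only that the set $\{|p|\ge Me^{-4ns}\}$ has measure $\gtrsim s^2$ (equivalently, a Remez bound $e^{Cn\sqrt{s}}$ in place of $e^{4ns}$), because $\cosh^{-1}\bigl(\tfrac{2+\sigma}{2-\sigma}\bigr)\sim\sqrt{2\sigma}$ as $\sigma\to0$, and this square-root loss is \emph{sharp} for algebraic polynomials once you retain only the total measure of $\tilde F$ (the stretched Chebyshev polynomial $T_{2n}$ on $[-1,1-\sigma]$ is extremal). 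Worse, the regime you cannot handle is exactly the one this paper uses: in the proof of Theorem~\ref{thm:stretched_exponential} the present theorem is invoked with $s=3n^{-\frac{1}{\alpha+1}}$, so $s\to0$ while $ns\to\infty$. With only the $s^2$-bound, the surviving set after deleting the interval $\bigl(-n^{-\frac{1}{\alpha+1}},n^{-\frac{1}{\alpha+1}}\bigr)$ would be empty, and rescaling $s$ to compensate degrades the upper bound to $\exp\bigl(Cn^{\frac{2\alpha+1}{2\alpha+2}}\bigr)$, which no longer matches the lower bound.

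Of your two proposed repairs, the dyadic Bernstein iteration cannot work as described: Bernstein gives $|p(\theta)|\ge M(1-n|\theta-\theta_0|)$, which is vacuous beyond distance $O(1/n)$ from the known maximum, and there is no lower bound on the heights of the other local extrema (take $p=F_m^{\ell}$, essentially all of whose mass sits in a single peak), so ``nested scales around local extrema'' has nothing to iterate over; the true set $\{|p|\ge Me^{-4ns}\}$ for such $p$ is large because of the slow polynomial decay of the peak, which Bernstein does not see. Your second suggestion is the right idea---the exceptional set $\tilde F$ is not arbitrary, but is quadratically compressed by $\cos$ precisely near the endpoint $x=1$ where $R$ peaks, and exploiting this converts $\sqrt{\sigma}$ back into $s$---but making it precise requires a pointwise, location-sensitive Remez inequality (linear rather than square-root dependence on the exceptional measure when the evaluation point is separated from the bad set), which is itself a theorem of essentially the same depth as the one being proved. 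As written, your argument proves the statement only for $s\le c/n$ and $s\ge s_0$.
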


\begin{proof}[Proof of Theorem~\ref{thm:stretched_exponential}]
We first prove the upper bound. We may assume
$3n^{-\frac{1}{\alpha+1}}\leq \pi/2$. Fix $p\in\mathcal{T}^+_n$ and
denote $M:=\max_{\theta\in\mathbb{R}}|p(\theta)|$. Applying
Theorem~\ref{thm:Remez} with $s=3n^{-\frac{1}{\alpha+1}}$ yields
that
$$\left|\left\{-\pi\leq\theta<\pi: |p(\theta)|\geq M\exp\left(-12n^{\frac{\alpha}{\alpha+1}}\right)\right\}\right|\geq 3n^{-\frac{1}{\alpha+1}}.$$
Let
$$A:=\left\{-\pi\leq\theta<\pi: |p(\theta)|\geq M\exp\left(-12n^{\frac{\alpha}{\alpha+1}}\right)\right\}\setminus\left(-n^{-\frac{1}{\alpha+1}},n^{-\frac{1}{\alpha+1}}\right).$$
It follows that $|A|\geq n^{-\frac{1}{\alpha+1}}$ and
$W_{\alpha}(\theta)\geq
\exp\left(-n^{\frac{\alpha}{\alpha+1}}\right)$ for every $\theta\in
A$, whence
\begin{align*}
\int_{-\pi}^{\pi} p(\theta)W_{\alpha}(\theta)d\theta&\geq\int_A
p(\theta)W_{\alpha}(\theta)d\theta\geq
n^{-\frac{1}{\alpha+1}}M\exp\left(-12n^{\frac{\alpha}{\alpha+1}}\right)\exp\left(-n^{\frac{\alpha}{\alpha+1}}\right)\geq\\
&\ge
n^{-\frac{1}{\alpha+1}}\exp\left(-13n^{\frac{\alpha}{\alpha+1}}\right)\frac{1}{2\pi}\int_{-\pi}^{\pi}
p(\theta)d\theta.
\end{align*}
By Proposition \ref{propos:Kane} and \eqref{eq:L_1_Bernstein} we get that
$$\overbar{N}^{\trig}_{W_{\alpha}}(n)\leq C\,n^{1+\frac{1}{\alpha+1}}\exp\left(13n^{\frac{\alpha}{\alpha+1}}\right)$$
and the upper bound follows.

To get the lower bound we need to show that if $\theta_1\leq
\theta_2\leq\ldots\leq \theta_N$ are the nodes of a Chebyshev-type
trigonometric quadrature of degree $n$ for $W_{\alpha}$, then $\log
N\geq c(\alpha)\,n^{\frac{\alpha}{\alpha+1}}$. Fix such a
quadrature. With no loss of generality we may assume that all nodes
are in $[-\pi,\pi]$. Let
$m:=\lfloor\frac{1}{12}6^{\frac{\alpha}{\alpha+1}}n^{\frac{1}{\alpha+1}}\rfloor$,
$\ell:=\lfloor
6^{\frac{1}{\alpha+1}}n^{\frac{\alpha}{\alpha+1}}-1\rfloor$,
$r:=\lfloor
\min\{6^{\frac{1}{\alpha+1}},\alpha/6^{\alpha+1}\}n^{\frac{\alpha}{\alpha+1}}\rfloor$.
We may assune that $n$, and hence also $m,\ell$ and $r$ are large
enough, as functions of $\alpha$, for the following calculation.

As in the proof of Theorem \ref{thm:trig} we first show that  the interval
$\left[-\frac{\pi}{2m+1},\frac{\pi}{2m+1}\right]$ contains at least
one node of the Chebyshev-type trigonometric quadrature.
Let $p$ be the polynomial
$p(\theta):=F_m(\theta)^{\ell}\left(F_m(\theta)-F_m(\frac{\pi}{2m+1})\right)$.  Note that $\deg p=2m(\ell+1)\leq n$. If there is no node of the Chebyshev-type trigonometric quadrature in the interval
$\left[-\frac{\pi}{2m+1},\frac{\pi}{2m+1}\right]$ then, as in the proof of Theorem \ref{thm:trig}, $\int_{-\pi}^{\pi}p(\theta)W_{\alpha}(\theta)d\theta\leq 0$.
However, using \eqref{eq:F_m_even_decreasing},
\begin{align*}\int_0^{\frac{\pi}{2m+1}}&p(\theta)W_{\alpha}(\theta)d\theta\geq\int_{\frac{\pi}{3(2m+1)}}^{\frac{2\pi}{3(2m+1)}}p(\theta)W_{\alpha}(\theta)d\theta\geq\\
&\geq\frac{\pi}{3(2m+1)}\left(F_m\left(\frac{2\pi}{3(2m+1)}\right)-F_m\left(\frac{\pi}{2m+1}\right)\right)F_m\left(\frac{2\pi}{3(2m+1)}\right)^{\ell}W_{\alpha}\left(\frac{\pi}{3(2m+1)}\right)\geq\\
&\geq\frac{c}{2m+1}\left(\frac{3\sqrt{3}}{2\pi}\right)^{2\ell}\exp\left(-\left(\frac{3(2m+1)}{\pi}\right)^{\alpha}\right)\geq
\left(\frac{3}{4}\right)^{2\ell}.
\end{align*}
In the last inequality we have used that
\begin{gather*}
\lim_{n\to\infty}
\frac{1}{2\ell}\left(\frac{3(2m+1)}{\pi}\right)^{\alpha}=\frac{1}{12}\left(\frac{3}{\pi}\right)^{\alpha}<\frac{1}{12}
\end{gather*}
together with the fact that $\frac{3\sqrt{3}}{2\pi}e^{-1/12} >
\frac{3}{4}$.
We also have, using
\eqref{eq:F_m_upper_bound},\eqref{eq:F_m_even_decreasing} and
\eqref{eq:F_m_at_pi_over_2m+1}, that
\begin{gather*}-\int_{\frac{\pi}{2m+1}}^{\pi}p(\theta)W_{\alpha}(\theta)d\theta\leq \int_{\frac{\pi}{2m+1}}^{\pi}F_m(\theta)^\ell d\theta\leq
\frac{\pi}{2m+1}F_m\left(\frac{\pi}{2m+1}\right)^\ell+
\int_{\frac{2\pi}{2m+1}}^{\pi}\left(\frac{\pi}{(2m+1)\theta}\right)^{2\ell}d\theta\leq\\
\leq \frac{\pi}{2m+1}\left(\frac{2}{3}\right)^{2\ell}+
\frac{(2m-1)\pi}{2m+1}\left(\frac{1}{2}\right)^{2\ell}\leq
\pi\left(\frac{2}{3}\right)^{2\ell},\end{gather*} whence
\begin{gather*}
\int_{-\pi}^{\pi}p(\theta)W_{\alpha}(\theta)d\theta=2\int_{0}^{\pi}p(\theta)W_{\alpha}(\theta)d\theta>0.
\end{gather*}
Therefore there is at least one node of the Chebyshev-type
trigonometric quadrature in the interval
$\left[-\frac{\pi}{2m+1},\frac{\pi}{2m+1}\right]$. Let
$\theta_{j_0}$ be one such node. Let
$I:=\int_{-\pi}^{\pi}W_{\alpha}(\theta)d\theta$. Now, since
$\deg\left((F_m)^r\right)=2rm\leq n$, it follows, using
\eqref{eq:F_m_even_decreasing}, \eqref{eq:F_m_at_pi_over_2m+1} and
\eqref{eq:F_m_upper_bound}, that
\begin{multline*}
\frac{I}{N}\left(\frac{2}{\pi}\right)^{2r}\leq\frac{I}{N}\,F_m\left(\frac{\pi}{2m+1}\right)^r\leq\frac{I}{N}\,F_m(\theta_{j_0})^r\leq\frac{I}{N}\sum_{j=1}^N F_m(\theta_j)^r=\int_{-\pi}^{\pi}F_m(\theta)^r W_{\alpha}(\theta)d\theta\leq\\
\leq
\int_{-\pi}^{\pi}\left(\frac{\pi}{(2m+1)\theta}\right)^{2r}e^{-\frac{1}{|\theta|^{\alpha}}}d\theta\leq
2\pi\left(\frac{\pi}{2m+1}\left(\frac{2r}{e\alpha}\right)^{1/\alpha}\right)^{2r}<
2\pi\left(\frac{\pi}{6}\right)^{2r},
\end{multline*}
where in the penultimate inequality we used that
$\left(\frac{2r}{e\alpha}\right)^{2r/\alpha}$ is the maximum of
$\theta^{-2r}\exp\left(-|\theta|^{-\alpha}\right)$ and in the last
inequality we used that
$$\frac{\pi}{2m+1}\left(\frac{2r}{e\alpha}\right)^{1/\alpha}\leq \frac{\pi}{2m+1}\left(\frac{2\cdot\frac{\alpha}{6^{\alpha+1}}\cdot
n^{\frac{\alpha}{\alpha+1}}}{e\alpha}\right)^{1/\alpha}\xrightarrow[n\to\infty]{}
\frac{\pi}{6}\left(\frac{6^{\frac{\alpha}{\alpha+1}}}{3e}\right)^{1/\alpha}<\frac{\pi}{6}.$$
Therefore $N> \frac{I}{2\pi}\left(\frac{12}{\pi^2}\right)^{2r}$ and
the lower bound follows.
\end{proof}

\section{Discussion and open questions}\label{sec:discussion}

In our work we find the order of magnitude of the minimal number of
nodes required in Chebyshev-type quadratures for doubling weight
functions, and also briefly discuss more general weight functions.
It is natural to wonder whether any of the qualitative phenomena
observed for doubling weight functions are in fact true in greater
generality. In this section we briefly discuss some questions of
this type. The questions are formulated in the trigonometric case
but are natural also on the interval.

\subsubsection*{Multiplication by a function}

Let $h$ be a $2\pi$-periodic measurable function satisfying that
there exist $M,m>0$ such that
\begin{equation*}
m<h(x)<M
\end{equation*}
almost everywhere. Let $W$ be a $2\pi$-periodic weight function. Is
it true that the minimal number of nodes required in Chebyshev-type
quadratures for $W$ and for $hW$ is of the same order of magnitude?
Precisely, that there exist $C(M,m),c(M,m)>0$, depending only on $M$
and $m$ (in fact, only on $M/m$ by homogeneity), such that
\begin{align}
  c(M,m) N^{\trig}_{W}(n) \le\, &N^{\trig}_{hW}(n) \le C(M,m)
  N^{\trig}_{W}(n),\quad n\ge 1,\label{eq:multiplication_question}\\
  c(M,m) \overbar{N}^{\trig}_{W}(n) \le\, &\overbar{N}^{\trig}_{hW}(n) \le C(M,m)
  \overbar{N}^{\trig}_{W}(n),\quad n\ge
  1.\label{eq:multiplication_question_overbar}
\end{align}
Our results show that this phenomenon holds for doubling weight
functions.

\subsubsection*{Sharpness of Kane's bound}

Our upper bounds on $\overbar{N}^{\trig}_{W}(n)$ are based on Kane's
general upper bound given in Proposition~\ref{propos:Kane}. In fact,
we do not know any example of a $2\pi$-periodic weight function
which is non-zero almost everywhere for which this bound is not
sharp up to constants. Does it give the correct order of magnitude
of $\overbar{N}^{\trig}_{W}(n)$ (as $n$ tends to infinity) for all
such weight functions? A positive answer will also verify the
relation \eqref{eq:multiplication_question_overbar} discussed in the
previous question.

It is necessary to make some assumption on the support of the weight
function. For instance, as pointed out to us by Kane, if $W$ is the
$2\pi$-periodic weight function defined for $-\pi\leq\theta<\pi$ by
\begin{equation*}W(\theta):=\begin{cases}1& |\theta|\le \frac{\pi}{6}\\ 0 &|\theta|>\frac{\pi}{6}\end{cases}.\end{equation*}
For even $n$, the bound on $\overbar{N}^{\trig}_{W}(n)$ given by
Proposition~\ref{propos:Kane} is at least $2^n$, by considering the
polynomial $p(\theta) = (\sin\theta)^n\in\mathcal{T}^{+}_n$.
However, it is in fact true that $cn^2\le N^{\trig}_W(n)\le
\overbar{N}^{\trig}_{W}(n)\le C n^2$. To see this, observe that
$W(\theta) = w(\cos\theta)|\sin\theta|$ for the weight function $w$
on $[-1,1]$ defined by
\begin{equation*}
  w(x):=\begin{cases}\frac{1}{\sqrt{1-x^2}}& x\ge \frac{\sqrt{3}}{2}\\ 0 &x<\frac{\sqrt{3}}{2}\end{cases}.
\end{equation*}
A Chebyshev-type trigonometric quadrature for $W$ may be obtained
from a Chebyshev-type quadrature for $w$ (as in the proof of
Theorem~\ref{thm:general}) by mapping the nodes $(t_j)_{j=1}^N$ to
the nodes $(\arccos t_j)_{j=1}^N\cup(-\arccos t_j)_{j=1}^N$ (where
the union is interpreted as a multiset union). Now, $cn^2\le
N_{w}(n)\leq\overbar{N}_{w}(n)\le Cn^2$ as follows from
Theorem~\ref{thm:general} applied to $w$ composed with an affine
mapping.

\subsubsection*{Locality}
Theorem~\ref{thm:trig} shows that the order of magnitude of
$N^{\trig}_{W}(n)$ and $\overbar{N}^{\trig}_{W}(n)$ is given by
$R^{\trig}_W(n)$ for doubling $2\pi$-periodic weight functions $W$.
The order of magnitude of $R^{\trig}_W(n)$, as a function of $n$, is
determined by the expression
$\inf_{x\in\mathbb{R}}\int_{x-\frac{1}{n}}^{x+\frac{1}{n}}W(\theta)d\theta$.
This gives a sense to the idea that the orders of magnitude of
$N^{\trig}_{W}(n)$ and $\overbar{N}^{\trig}_{W}(n)$ depend only on
local features of $W$. Is this property shared also by non-doubling
weight functions?

To make this question precise, suppose $W_1,W_2,\ldots,W_m$ and $W$
are $2\pi$-periodic weight functions such that for every real $x$,
$W$ coincides with some $W_i$ in a neighbourhood of $x$. Is it true
that there exists a constant $M$, depending only on
$W_1,W_2,\ldots,W_m$ and $W$, for which
\begin{align*}
&N^{\trig}_{W}(n)\leq M\max_{1\leq i\leq m}N^{\trig}_{W_i}(n),\\
&\overbar{N}^{\trig}_{W}(n)\leq M\max_{1\leq i\leq
m}\overbar{N}^{\trig}_{W_i}(n)
\end{align*}
for every $n\ge 1$? One cannot expect similar inequalities in the
opposite direction to hold, even in the doubling case, as no control
is provided on $W_i$ away from the neighbourhoods where it coincides
with $W$ and Theorem~\ref{thm:trig} shows that changing $W_i$ in
these regions can increase $N^{\trig}_{W_i}(n)$ and
$\overbar{N}^{\trig}_{W_i}(n)$ significantly.

\subsubsection*{Numerical algorithms}
Our results provide bounds on the minimal number of nodes in
Chebyshev-type quadratures for doubling weight functions. From a
numerical analysis point of view it is of interest to complement
these with efficient algorithms for finding such quadrature
formulas. It appears that this problem has not received much
attention in the literature and we are not aware of efficient
algorithms for it. Research has been focused on identifying in the
set of all Chebyshev-type quadratures with a given number of nodes
and a given degree of accuracy, such formulas which are optimal in a
specified sense. The interested reader is pointed to the recent book
of Brass and Petras \cite[Chapter 9]{Brass-Petras}, to the older
survey of Gautschi \cite{Gautschi_Survey}, to the works of Anderson
and Gautschi \cite{Anderson_and_Gautschi} and of F\"orster and
Ostermeyer \cite{Forster_and_Ostermeyer} and to the references
there, for discussion of such notions and their properties. The work
of Anderson and Gautschi suggests also an algorithm for finding a
quadrature formula with certain optimality properties.

\subsubsection*{Extensions}
Both the notions of quadrature formula and the notion of doubling
are, in fact, defined for the measure given by $w(t)dt$ (or
$W(\theta)d\theta$ in the trigonometric case). Thus these notions
may be extended naturally to general finite, positive measures. With
these extended notions, Theorem~\ref{thm:general},
Theorem~\ref{thm:trig} and their proofs continue to apply, mutatis
mutandis, for the class of doubling measures.

Some authors extend the notion of quadrature formula further,
allowing the nodes to be outside of $[-1,1]$ for weight functions on
$[-1,1]$, or allowing the nodes to take complex values for
$2\pi$-periodic weight functions. We do not consider these extended
notions here.

\appendix

\section{Proof of Proposition \ref{propos:Kane}}\label{sec:appendix_a}

In presenting the proof of Proposition \ref{propos:Kane},
essentially following the proof in \cite{Kane}, we use the following
three lemmas.

\begin{lemma}\label{lem:analysis}
Let $W$ be a $2\pi$-periodic weight function and let $n$ and $N$ be
positive integers, such that
\begin{equation}\label{eq:N_cond_in_lemma}
2N>\int_{-\pi}^{\pi}W(\theta)d\theta\sup_{p\in
\mathcal{T}^{+}_n}\frac{\int_{-\pi}^{\pi}\left|p'(\theta)\right|d\theta}{\int_{-\pi}^{\pi}
p(\theta)W(\theta)d\theta}.
\end{equation}
Then there is a finite set $S\subset[-\pi,\pi)$ such that every
non-zero $q\in\mathcal{T}_n$ for which $\int_{-\pi}^{\pi}
q(\theta)W(\theta)d\theta=0$ satisfies
\begin{equation}\label{eq:N_q_max_inequality}
  2N\max_{x\in S}
  q(x)>\int_{-\pi}^{\pi}\left|q'(\theta)\right|d\theta.
\end{equation}
\end{lemma}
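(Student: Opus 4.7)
The plan is first to prove, for every non-zero $q \in \mathcal{T}_n$ with $\int_{-\pi}^{\pi} q(\theta)W(\theta)d\theta = 0$, a pointwise strict inequality $2NM > \int_{-\pi}^{\pi}|q'(\theta)|d\theta$ at the maximum $M := \max_\theta q(\theta)$, with a $q$-independent relative slack; and then to replace $M$ by $\max_{x\in S}q(x)$ on a finite net $S \subset [-\pi,\pi)$ whose mesh is chosen small enough (depending on $W,n,N$) to keep the inequality strict. Bernstein's $L^\infty$ inequality will supply the Lipschitz control needed to bound the discretization error, and to apply it uniformly in $q$ I will need an a priori bound on $\|q\|_\infty$ in terms of $M$.

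Set $K := \sup_{p\in\mathcal{T}^{+}_n}\int_{-\pi}^{\pi}|p'(\theta)|d\theta\big/\int_{-\pi}^{\pi}p(\theta)W(\theta)d\theta$ so that \eqref{eq:N_cond_in_lemma} becomes $\delta := 2N - K\int_{-\pi}^{\pi}W(\theta)d\theta > 0$. For any admissible $q$ one has $M > 0$, since a non-zero element of $\mathcal{T}_n$ has at most $2n$ zeros and hence cannot vanish on the positive-measure set $\{W > 0\}$, which together with $\int qW = 0$ rules out $q \le 0$ everywhere. Then $p := M - q$ lies in $\mathcal{T}^{+}_n$ (non-identically zero, else $M\int W = \int qW = 0$ forces $M = 0$), and the definition of $K$ yields
\begin{equation*}
\int_{-\pi}^{\pi}|q'(\theta)|d\theta = \int_{-\pi}^{\pi}|p'(\theta)|d\theta \le K\int_{-\pi}^{\pi}p(\theta)W(\theta)d\theta = KM\int_{-\pi}^{\pi}W(\theta)d\theta = (2N-\delta)M.
\end{equation*}
Integrating $q'$ between an argmin and an argmax of $q$ gives the total-variation bound $\max q - \min q \le \int_{-\pi}^{\pi}|q'(\theta)|d\theta$, hence $\|q\|_\infty \le (2N-1)M$. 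Bernstein's inequality $\|q'\|_\infty \le n\|q\|_\infty$ then makes $q$ Lipschitz with constant at most $n(2N-1)M$. Now take $S$ to be any finite $\epsilon$-net of $[-\pi,\pi)$ with $\epsilon < \delta/(2Nn(2N-1))$; for any admissible $q$, any argmax $x_0$ of $q$, and any $s \in S$ with $|s - x_0| \le \epsilon$,
\begin{equation*}
2N\max_{x\in S}q(x) \ge 2N q(s) \ge 2NM - 2N\epsilon n(2N-1) M > 2NM - \delta M \ge \int_{-\pi}^{\pi}|q'(\theta)|d\theta,
\end{equation*}
which is \eqref{eq:N_q_max_inequality}, with $S$ depending only on $W,n,N$.

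The main obstacle is the uniformity of $S$ across all admissible $q$: the pointwise bound $2NM > \int|q'|$ alone would allow polynomials with $M$ arbitrarily small compared to $\|q\|_\infty$, forcing $\epsilon \to 0$ and precluding any single finite net. The decisive observation is that the same comparison with the hypothesis, combined with the periodic total-variation estimate, automatically supplies the uniform ratio bound $\|q\|_\infty \le (2N-1)M$, and this is precisely where the strict gap $\delta > 0$ in \eqref{eq:N_cond_in_lemma} is consumed.
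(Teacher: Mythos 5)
Your proof is correct, and while it shares the paper's overall skeleton (establish $\max q>0$, compare $\int|q'|$ with the hypothesis via the non-negative polynomial $\text{const}-q$, and reduce the global maximum to a maximum over a finite set), the way you produce the finite set $S$ is genuinely different. The paper proceeds by soft compactness: it shows that $\{r\in\mathcal{T}_n:\int rW=0,\ \max r=1\}$ is compact (itself via a secondary compactness argument giving a uniform bound on $\|r\|_\infty/\max r$), covers it by the open sets $\{r: r(x)>1/\eta\}$, and extracts a finite subcover; this yields $\max_x q(x)\le\eta\max_{x\in S}q(x)$ with no explicit control on $S$. You instead make everything quantitative: the key uniform ratio bound $\|q\|_\infty\le(2N-1)\max q$ is extracted directly from the hypothesis through the total-variation estimate $\max q-\min q\le\int_{-\pi}^{\pi}|q'|$ together with $\int|q'|\le(2N-\delta)\max q$, and then Bernstein's $L^\infty$ inequality converts this into a Lipschitz constant proportional to $\max q$, so that an explicit $\varepsilon$-net with $\varepsilon<\delta/(2Nn(2N-1))$ works. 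Your route costs one extra classical ingredient (the $L^\infty$ Bernstein inequality, which the paper does not otherwise use) but buys a constructive $S$ with an explicit mesh size and dispenses with both compactness arguments; your observation that the hypothesis itself supplies the uniform bound on $\|q\|_\infty/\max q$ is a nice shortcut compared to the paper's auxiliary compactness step. I verified the quantitative chain: $2Nq(s)\ge 2NM-2Nn(2N-1)M\varepsilon>(2N-\delta)M\ge\int|q'|$ for $s$ within $\varepsilon$ of an argmax, and all the side conditions ($M>0$, $p=M-q\in\mathcal{T}_n^{+}$, $\int pW=M\int W>0$) are properly handled.
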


\begin{proof}
We consider $\mathcal{T}_n$ as a subset of the space of continuous
$2\pi$-periodic functions endowed with the supremum norm. Let
$V:=\{r\in\mathcal{T}_n\mid \int_{-\pi}^{\pi}
r(\theta)W(\theta)d\theta=0\}$ and observe that
\begin{equation}\label{eq:non-zero_in_V}
  \text{a non-zero $r\in V$ satisfies $\max_x r(x)>0$}
\end{equation}
as non-zero trigonometric polynomials cannot be zero on a set of
positive measure. Let $K:=\{r\in V\mid\max_{x}r(x)=1\}$. We first
prove that $K$ is a compact set, for which it suffices to show that
\begin{equation}\label{eq:compactness_criterion}
  \sup_{r\in K} \max_x |r(x)|<\infty.
\end{equation}
Let $B:=\{q\in V\mid \max_x |q(x)|=1\}$. The set $B$ is compact as a
closed and bounded set in the finite-dimensional space
$\mathcal{T}_n$. Thus the continuous functional $f\mapsto\max_x
f(x)$ attains a minimum on $B$, which must be positive due to
\eqref{eq:non-zero_in_V}. This implies
\eqref{eq:compactness_criterion}.

Take $\eta>1$ such that $2N$ is still bigger than $\eta$ times the
right-hand side of \eqref{eq:N_cond_in_lemma}. For every $-\pi\leq
x<\pi$ let $U_x:=\{r\in\mathcal{T}_n\mid r(x)>1/\eta\}$. Noting that
$\{U_x\}_{-\pi\leq x<\pi}$ is an open cover of the compact set $K$
we conclude that there is a finite set $S\subset [-\pi,\pi)$ such
that $K\subseteq\cup_{x\in S}U_x$. Consequently, using
\eqref{eq:non-zero_in_V},
\begin{equation}\label{eq:max_on_set}
\max_x q(x) \le \eta\max_{x\in S}q(x),\quad q\in V.
\end{equation}

Finally, let $q\in V$ be a non-zero trigonometric polynomial in $V$.
Define $p_0:=\eta(\max_{x\in S}q(x))-q$ and note that
$p_0\in\mathcal{T}^{+}_n$ by \eqref{eq:max_on_set} and $\max_{x\in
S} q(x)>0$ by \eqref{eq:non-zero_in_V} and \eqref{eq:max_on_set}.
Thus,
\begin{equation*}2N>\eta\int_{-\pi}^{\pi}W(\theta)d\theta\sup_{p\in \mathcal{T}^{+}_n}\frac{\int_{-\pi}^{\pi}\left|p'(\theta)\right|d\theta}{\int_{-\pi}^{\pi} p(\theta)W(\theta)d\theta}\geq\eta\int_{-\pi}^{\pi}W(\theta)d\theta\frac{\int_{-\pi}^{\pi}\left|p_0'(\theta)\right|d\theta}{\int_{-\pi}^{\pi} p_0(\theta)W(\theta)d\theta}=\frac{\int_{-\pi}^{\pi}\left|q'(\theta)\right|d\theta}{\max_{x\in s}q(x)}.
\qedhere\end{equation*}
\end{proof}

\begin{lemma}\label{lem:geometry}
Let $v_1,v_2,\ldots,v_R$ be points in ${\mathbb R}^n$, no $n+1$ of
which are on the same affine hyperplane, and let $Q\subset{\mathbb
R}^n$ be their convex hull. Let $-\pi\leq x_1<x_2<\ldots<x_R< \pi$
and let $N$ be a positive integer. Then there are continuous
functions $f_1,f_2,\ldots,f_N$ from $Q$ to $[-\pi,\pi)$ such that if
$v$ belongs to a facet of $Q$ whose vertices are
$v_{i_1},v_{i_2},\ldots,v_{i_n}$, $1\leq i_1<i_2<\ldots<i_n\leq R$,
then $\{f_1(v),f_2(v),\ldots,f_N(v)\}\subset[x_{i_1},x_{i_n}]$ and
in each of the intervals
$(x_{i_1},x_{i_2}),\ldots,(x_{i_{n-1}},x_{i_n})$ there is at most
one member of the set $\{f_1(v),f_2(v),\ldots,f_N(v)\}$.
\end{lemma}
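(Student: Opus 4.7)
The general-position hypothesis forces $Q$ to be a simplicial polytope in $\mathbb{R}^n$: every facet has exactly $n$ vertices and is thus an $(n-1)$-simplex. My plan is to write down each $f_j$ explicitly on each facet by a piecewise-linear formula in barycentric coordinates, check that the formulas from two adjacent facets agree on their common face so as to define continuous functions on $\partial Q$, and then extend to the interior of $Q$ by Tietze's theorem.

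On a facet $F = \{v_{i_1}, \ldots, v_{i_n}\}$ with $i_1 < \cdots < i_n$ and a point $v = \sum_k \lambda_k v_{i_k}$, I would put $\Lambda_k := \lambda_1 + \cdots + \lambda_k$, pick $p_j := j - \tfrac12$, and for $k = 1, \ldots, n-1$ define the ``soft Heaviside''
\[
  h_k^F(v; j) := \max\bigl(0,\, \min(1,\, p_j - N\Lambda_k + \tfrac12)\bigr) \in [0,1],
\]
which rises from $0$ to $1$ across $p_j = N\Lambda_k$ on a window of width $1$, and then set
\[
  f_j^F(v) := x_{i_1} + \sum_{k=1}^{n-1}(x_{i_{k+1}} - x_{i_k})\, h_k^F(v; j).
\]
Because $\Lambda_k$ is non-decreasing in $k$, the values $h_k^F(v; j)$ are non-increasing in $k$, so $f_j^F(v) \in [x_{i_1}, x_{i_n}]$; at a vertex $v_{i_l}$ of $F$ one has $h_k = 1$ for $k < l$ and $h_k = 0$ for $k \ge l$, giving $f_j^F(v_{i_l}) = x_{i_l}$ for every $j$.

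The combinatorial property then falls out: $f_j^F(v) \in (x_{i_l}, x_{i_{l+1}})$ forces $h_l^F(v;j) \in (0,1)$, hence $p_j \in (N\Lambda_l - \tfrac12, N\Lambda_l + \tfrac12)$, an open window of length $1$; since the $p_j$ are spaced by $1$, at most one index $j$ can meet this condition for each $l$. The hard part will be verifying consistency between adjacent facets. If $v \in F \cap F'$ and the shared vertices have indices $\{s_1 < \cdots < s_m\} \subseteq \{1,\ldots,n\}$ in $F$, then $\lambda_k = 0$ outside $\{s_1,\ldots,s_m\}$, so $\Lambda_k$ is piecewise constant in $k$, equal to $M_l := \lambda_{s_1} + \cdots + \lambda_{s_l}$ on each block $[s_l, s_{l+1})$, and equal to $0$ for $k < s_1$ and to $1$ for $k \ge s_m$. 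Telescoping $\sum (x_{i_{k+1}} - x_{i_k})$ within each block would collapse the defining sum to
\[
  f_j^F(v) = x_{i_{s_1}} + \sum_{l=1}^{m-1}\bigl(x_{i_{s_{l+1}}} - x_{i_{s_l}}\bigr)\max\bigl(0,\min(1,p_j - NM_l + \tfrac12)\bigr),
\]
an expression that involves only the shared vertices, their $x$-values, and the shared barycentric coordinates. These are intrinsic to $v$ and do not depend on whether $v$ is viewed as lying in $F$ or $F'$, so $f_j^F(v) = f_j^{F'}(v)$. Hence the $f_j$ patch into continuous functions on $\partial Q$ taking values in $[x_1, x_R] \subset [-\pi, \pi)$.

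Finally, to pass from $\partial Q$ to $Q$ I would appeal to Tietze's extension theorem (since $\partial Q$ is closed in the compact metric space $Q$) to extend each $f_j|_{\partial Q}$ to a continuous function $Q \to \mathbb{R}$, then compose with the truncation $t \mapsto \min(x_R, \max(x_1, t))$ to land in $[x_1, x_R] \subset [-\pi, \pi)$; this composition is the identity on $\partial Q$, so the facet conditions are preserved. The main technical obstacle is the consistency calculation; the range, monotonicity, and at-most-one-per-open-gap properties are mechanical from the explicit formula.
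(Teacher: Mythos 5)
Your proof is correct, and its engine is the same as the paper's: on each facet the $N$ values are placed according to the $N$-scaled cumulative barycentric mass, and ``at most one value per open gap'' holds because landing strictly inside the $l$-th gap forces $N\Lambda_l$ into an open window of length $1$ about $p_j=j-\tfrac12$, which the unit-spaced $p_j$ can meet at most once --- exactly the role played in the paper by the strictly increasing correction $\frac{\pi+x}{2\pi}$ added to $N\sum_{x_{i_j}\le x}\alpha_j$ before taking the generalized inverse $f_i(v)=\min\{x:\rho_v(x)\ge i\}$. The packaging differs: the paper triangulates all of $Q$ using the $v_i$ as vertices and defines the $f_i$ globally as quantile functions, leaving simplex-independence and continuity of the generalized inverse as short verifications; you define the functions only on $\partial Q$ by explicit clamped-linear formulas, prove agreement of the two facet formulas on a common face by the telescoping/block computation (this is the crux, and it is right --- the shared vertices inherit the same global ordering from either facet, so the collapsed expression is intrinsic to $v$), and then pass to $Q$ by Tietze plus truncation, which is legitimate because the lemma constrains the functions only on facets. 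Your version makes continuity, the range, and the per-gap count completely mechanical at the cost of invoking an extension theorem; the paper's is more compact but less explicit. Both are valid.
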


\begin{proof} Take some triangulation of $Q$. Let $v\in Q$. If $v$ is in the simplex of the chosen triangulation whose vertices are $v_{i_0},v_{i_1},\ldots,v_{i_n}$ then $v$ can be uniquely presented as a convex combination $v=\sum_{j=0}^n \alpha_j v_{i_j}$. For every $-\pi\leq x< \pi$  let
$$\rho_v(x):=\frac{\pi+x}{2\pi}+N\sum_{\substack{0\leq j\leq n\\x_{i_j}\leq x}}\alpha_j$$
and observe that this definition is independent of the choice of the
simplex of the triangulation which contains $v$ (if more than one
exists). Define $f_i(v):=\min\{-\pi\leq x< \pi\mid \rho_v(x)\geq
i\}$. It is easy to verify that each $f_i$ is continuous. Let $v$ be
a point on a facet of $Q$ whose vertices are
$v_{i_1},v_{i_2},\ldots,v_{i_n}$, $1\leq i_1<i_2<\ldots<i_n\leq R$.
Then $\rho_v(x)<1$ for every $-\pi\leq x<x_{i_1}$ and $\rho_v(x)>N$
for every $x_{i_n}<x< \pi$, whence
$\{f_1(v),f_2(v),\ldots,f_N(v)\}\subset[x_{i_1},x_{i_n}]$. Also
$\rho_v(y)-\rho_v(x)<1$ for every $x_{i_{k-1}}<x<y<x_{i_k},\,1<k\leq
n$, whence in each of the intervals
$(x_{i_1},x_{i_2}),\ldots,(x_{i_{n-1}},x_{i_n})$ there is at most
one member of the set $\{f_1(v),f_2(v),\ldots,f_N(v)\}$.
\end{proof}

\begin{lemma}\cite[Proposition 7]{Kane}\label{lem:topology}
Let $Q\subset {\mathbb R}^n$ be a convex polytope with $0$ in its
interior. Let $F:Q\to{\mathbb R}^n$ be a continuous function such
that if $\{v\in{\mathbb R}^n\mid u\cdot v=1\}$ is an affine
hyperplane containing a facet $T$ of $Q$ then $u\cdot F(v)>0$ for
every $v\in T$. Then there is a $v\in Q$ such that $F(v)=0$.
\end{lemma}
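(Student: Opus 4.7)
The plan is to prove this by the homotopy invariance of the Brouwer degree, exploiting the fact that the hypothesis says $F$ points ``outward'' on every facet in a precise, quantitative sense. Specifically, since $0$ lies in the interior of $Q$, no facet passes through $0$; each facet $T$ therefore determines a unique vector $u=u_T\in\mathbb{R}^n$ such that $T\subset\{w: u\cdot w=1\}$, and the hypothesis is $u_T\cdot F(v)>0$ for all $v\in T$.

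My first step would be to introduce the straight-line homotopy
\[
H\colon Q\times[0,1]\to\mathbb{R}^n,\qquad H(v,t):=(1-t)v+tF(v),
\]
which interpolates from the identity at $t=0$ to $F$ at $t=1$. The key verification is that $H(v,t)\neq 0$ for every $(v,t)\in\partial Q\times[0,1]$. Given $v\in\partial Q$, pick any facet $T$ containing $v$ with associated normal $u=u_T$. Then $u\cdot v=1$ and $u\cdot F(v)>0$, so
\[
u\cdot H(v,t)=(1-t)(u\cdot v)+t(u\cdot F(v))=(1-t)+t(u\cdot F(v))>0
\]
for every $t\in[0,1]$, being a convex combination of the two positive numbers $1$ and $u\cdot F(v)$. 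In particular $H(v,t)\neq 0$.

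Once this boundary nonvanishing is established, homotopy invariance of the Brouwer degree gives
\[
\deg(F,Q,0)=\deg(\operatorname{id},Q,0)=1,
\]
the last equality because $0\in\operatorname{int}(Q)$. Since a continuous map with nonzero degree at a point must hit that point, $F$ has a zero in $Q$, proving the lemma.

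The only real obstacle is packaging the topological input cleanly; the facet-by-facet inner-product computation is a two-line check. If one wishes to avoid citing the Brouwer degree directly, the argument can be recast as follows: assume for contradiction that $F$ never vanishes on $Q$; then the map $v\mapsto F(v)/\|F(v)\|$ is a well-defined continuous map $Q\to S^{n-1}$, and the homotopy $H$ (after normalizing) shows that its restriction to $\partial Q$ is homotopic, through maps into $S^{n-1}$, to $v\mapsto v/\|v\|$, yielding a retraction of $Q$ onto $\partial Q$ and contradicting the no-retraction theorem. Either formulation works, and the proof is complete modulo this standard topological fact.
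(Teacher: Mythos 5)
Your proof is correct, but it takes a genuinely different route from the paper's. You verify the boundary estimate $u_T\cdot H(v,t)=(1-t)+t\,u_T\cdot F(v)>0$ for the straight-line homotopy and then invoke the homotopy invariance, normalization, and existence properties of the Brouwer degree to get $\deg(F,\operatorname{int}Q,0)=\deg(\operatorname{id},\operatorname{int}Q,0)=1$, hence a zero of $F$. The paper instead argues by contradiction using only Brouwer's fixed point theorem: assuming $F\neq 0$ on $Q$, it builds the explicit map $h(x):=-F\bigl(\tfrac{|x|}{\lVert x\rVert_Q}x\bigr)/\bigl|F\bigl(\tfrac{|x|}{\lVert x\rVert_Q}x\bigr)\bigr|$ on the closed unit ball, where $\lVert\cdot\rVert_Q$ is the Minkowski functional of $Q$; a fixed point $x_0$ of $h$ must have $|x_0|=1$, so $y_0:=x_0/\lVert x_0\rVert_Q$ lies on a facet with normal $u$, and then $1=u\cdot y_0=-\,u\cdot F(y_0)/(\lVert x_0\rVert_Q\,|F(y_0)|)<0$, a contradiction. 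The essential computation --- pairing with the facet normal and using $u\cdot v=1$, $u\cdot F(v)>0$ --- is the same in both arguments, and the underlying topological inputs are equivalent in strength. Your version buys directness (no contradiction), the textbook formulation (an outward-pointing vector field on a convex body has a zero), and the slightly stronger conclusion that the zero lies in $\operatorname{int}Q$, at the cost of citing the full degree apparatus rather than a single application of the fixed point theorem. One minor caveat: in your alternative no-retraction packaging, passing from ``$F/\lvert F\rvert$ extends over $Q$ and its boundary restriction is homotopic to the radial homeomorphism'' to an actual retraction of $Q$ onto $\partial Q$ requires the standard collar/gluing (homotopy extension) step, which you gesture at but do not carry out; the degree formulation, however, is complete as written.
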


\begin{proof} By way of contradiction, assume that $F(v)\neq 0$ for every $v\in Q$.
Let $B^n:=\{x\in{\mathbb R}^n\mid |x|\leq 1\}$ be the closed unit
ball of ${\mathbb R}^n$. For every $u\in{\mathbb R}^n$ let $\lVert
u\rVert_Q:=\min\{\lambda>0\mid \frac{1}{\lambda}u\in Q\}$ and define
$h:B^n\to B^n$ by
$$h(x):=\begin{cases}-\frac{F\left(\frac{|x|}{\lVert x\rVert_Q}x\right)}{\left| F\left(\frac{|x|}{\lVert x\rVert_Q}x\right)\right|}\quad &x\neq 0\\
-\frac{F\left(0\right)}{\left| F\left(0\right)\right|}\quad &x=0
\end{cases}.
$$
The function $h$ is continuous, so by Brouwer's fixed point theorem,
there exists an $x_0\in B^n$ such that $h(x_0)=x_0$. Let
$y_0:=\frac{|x_0|}{\lVert x_0\rVert_Q}x_0$. Since $|h(x)|=1$ for
every $x\in B^n$ we get that $|x_0|=|h(x_0)|=1$, and therefore
$y_0=\frac{|x_0|}{\lVert x_0\rVert_Q}x_0=\frac{x_0}{\lVert
x_0\rVert_Q}$  is on the boundary of $Q$. Therefore, $y_0$ belongs
to at least one facet $T$ of $Q$. Suppose $T$ is contained in the
affine hyperplane $\{x\in{\mathbb R}^n\mid u\cdot x=1\}$. Then
$u\cdot F(y_0)>0$ and we get a contradiction since
\begin{multline*}
1=u\cdot y_0=u\cdot \frac{x_0}{\lVert x_0\rVert_Q}=\frac{1}{\lVert x_0\rVert_Q}\,u\cdot x_0=\frac{1}{\lVert x_0\rVert_Q}\,u\cdot h(x_0)=\\
=\frac{1}{\lVert x_0\rVert_Q}\,u\cdot \left(-\frac{F(y_0)}{\left|
F(y_0)\right|}\right)=-\frac{1}{\lVert x_0\rVert_Q\left|
F(y_0)\right|}\,u\cdot F(y_0)<0. \qedhere\end{multline*}
\end{proof}
\begin{proof}[Proof of Proposition \ref{propos:Kane}]
Suppose $N$ is a positive integer such that
$$2N>\int_{-\pi}^{\pi} W(\theta)d\theta\sup_{p\in \mathcal{T}^{+}_n}\frac{\int_{-\pi}^{\pi}\left|p'(\theta)\right|d\theta}{\int_{-\pi}^{\pi} p(\theta)W(\theta)d\theta}
$$
and let $S$ be the finite set provided by Lemma~\ref{lem:analysis}.
Let $\varphi_0\equiv 1,\varphi_1,\ldots,\varphi_{2n}$ be a basis of
$\mathcal{T}_n$. For every $-\pi\leq x< \pi$ let $E(x)$ be the
vector
$\left(\left(\int_{-\pi}^{\pi}
W(\theta)d\theta\right)\varphi_i(x)-\int_{-\pi}^{\pi}\varphi_i(\theta)W(\theta)d\theta\right)_{i=1}^{2n}$
in ${\mathbb R}^{2n}$. It is straightforward to check that no $2n+1$
elements of $\{E(x)\}_{-\pi\le x<\pi}$, are on the same affine
hyperplane. Define $Q$ to be the convex hull of
$\{E\left(x\right)\}_{x\in S}$. We claim that $0$ is in the interior
of $Q$. Indeed, otherwise there exists a non-zero
$u\in\mathbb{R}^{2n}$ for which $u\cdot E(x) \le 0$ for all $x\in
S$. However, the non-zero trigonometric polynomial in
$\mathcal{T}_n$ defined by $p_u(x):=u\cdot E(x)$ satisfies
$\int_{-\pi}^\pi p_u(\theta)W(\theta)d\theta = 0$ and thus, by
\eqref{eq:N_q_max_inequality}, we have that $\max_{x\in S}
p_u(x)>0$, a contradiction.

Suppose $S=\{x_1,x_2,\ldots,x_R\}$ where $-\pi\leq
x_1<x_2<\ldots<x_R<\pi$. Let $f_1,f_2,\ldots,f_N$ be the continuous
functions from $Q$ to $[-\pi, \pi)$ guaranteed by
Lemma~\ref{lem:geometry}.
Let $v$ be a point on a facet of $Q$ whose vertices are
$E\left(x_{i_1}\right),E\left(x_{i_2}\right),\ldots,E\left(x_{i_{2n}}\right)$,
where $1\leq i_1<i_2<\ldots<i_{2n}\leq R$. Let $\{w\in{\mathbb
R}^n\mid u\cdot w=1\}$ be the affine hyperplane containing the facet
(using that $0$ is in the interior of $Q$). Let $q$ be the non-zero
trigonometric polynomial in $\mathcal{T}_n$ defined by $q(x):=u\cdot
E(x)$, so that $\int_{-\pi}^{\pi} q(\theta)W(\theta)d\theta=0$. Note
that $q(x)\leq 1$ for every $x\in S$ as $0$ is in the interior of
$Q$, whence $2N>\int_{-\pi}^{\pi}\left|q'\theta)\right|d\theta$ by
Lemma~\ref{lem:analysis}. Let $J:=\{1\leq j\leq N\mid
f_j(v)\notin\{x_{i_1},x_{i_2},\ldots,x_{i_{2n}}\}\}$. Note that
Lemma~\ref{lem:geometry} implies that for every $j\in J$ there is a
$2\leq k_j\leq 2n$ such that $f_j(v)\in(x_{i_{k_j-1}},x_{i_{k_j}})$
and these $k_j$ are distinct. Then, since $q\left(x_{i_k}\right)=1$
for every $1\leq k\leq 2n$,
\begin{multline*}
N>\frac{1}{2}\int_{-\pi}^{\pi}\left|q'(\theta)\right|d\theta\geq
\frac{1}{2}\sum_{j\in J}\left(\left|\int_{x_{i_{k_j-1}}}^{f_j(v)}q'(t)dt\right|+\left|\int_{f_j(v)}^{x_{i_{k_j}}}q'(t)dt\right|\right)=\\
=\sum_{j\in J}\frac{\left|q\left(f_j(v)\right)-q(x_{i_{k_j-1}})\right|+\left|q(x_{i_{k_j}})-q\left(f_j(v)\right)\right|}{2}=\\
=\sum_{j\in
J}\left|1-q\left(f_j(v)\right)\right|=\sum_{j=1}^N\left|1-q\left(f_j(v)\right)\right|\geq
N-\sum_{j=1}^N q\left(f_j(v)\right),
\end{multline*}
whence $u\cdot\sum_{j=1}^N E\left(f_j(v)\right) = \sum_{j=1}^N
q\left(f_j(v)\right)>0$.
Thus, if we define a function $F:Q\to\mathbb{R}^{2n}$ by
$F(v):=\sum_{j=1}^N E\left(f_j(v)\right)$ then $F$ satisfies the
assumptions of Lemma~\ref{lem:topology}.
Consequently, there is some $v\in Q$ such that $\sum_{j=1}^N
E\left(f_j(v)\right)=F(v)=0$, i.e., $\{f_j(v)\}_{j=1}^N$ are the
nodes of a Chebyshev-type trigonometric quadrature of degree $n$ for
$W$.
\end{proof}


\begin{thebibliography}{99}

\bibitem{Anderson_and_Gautschi}
L. A. Anderson\ and\ W. Gautschi, Optimal weighted Chebyshev-type quadrature formulas, Calcolo {\bf 12} (1975), no.~3, 211--248.

\bibitem{Bernstein1}
S. N. Bernstein, Sur les formules de quadrature de Cotes et
Tchebycheff, C.R. Acad. Sci. URSS {\bf 14} (1937), 323--326.

\bibitem{Bernstein2}
S. N. Bernstein, On quadrature formulas with positive coefficients,
Izv. Akad. Nauk SSSR Ser. Mat. {\bf 4} (1937), 479--503 (in
Russian).

\bibitem{Borwein-Erdelyi}
P. Borwein\ and\ T. Erd\'elyi, {\it Polynomials and polynomial
inequalities}, Graduate Texts in Mathematics, 161, Springer, New
York, 1995.

\bibitem{Brass-Petras}
H. Brass\ and\ K. Petras, {\it Quadrature theory}, Mathematical
Surveys and Monographs, 178, Amer. Math. Soc., Providence, RI, 2011.

\bibitem{Forster}
K.-J. F\"orster, On Chebyshev quadrature for a special class of
weight functions, BIT {\bf 26} (1986), no.~3, 327--332.

\bibitem{Forster_and_Ostermeyer}
K.-J. F\"orster\ and\ G.-P. Ostermeyer, On weighted Chebyshev-type quadrature formulas, Math. Comp. {\bf 46} (1986), no.~174, 591--599, {\rm S}21--{\rm S}27.

\bibitem{Gautschi_Survey}
W. Gautschi, Advances in Chebyshev quadrature, in {\it Numerical analysis (Proc. 6th Biennial Dundee Conf., Univ. Dundee, Dundee, 1975)}, 100--121. Lecture Notes in Math., 506, Springer, Berlin.

\bibitem{Geronimus}
L. Ya. Geronimus, On the \v Ceby\v sev quadrature formula, Izv.
Akad. Nauk SSSR Ser. Mat. 33 (1969), 1182-1207; English transl. in
Math. USSR-Izv. 3 (1969), 1115-1138.

\bibitem{Kane}
D. M. Kane, Small designs for path-connected spaces and path-connected homogeneous spaces,
Trans. Amer. Math. Soc. {\bf 367} (2015), no.~9, 6387--6414.

\bibitem{Karlin-Studden}
S. Karlin\ and\ W. J. Studden, {\it Tchebycheff systems: With
applications in analysis and statistics}, Pure and Applied
Mathematics, Vol. XV Interscience Publishers John Wiley \& Sons, New
York, 1966.

\bibitem{Kuijlaars1}
A. Kuijlaars, The minimal number of nodes in Chebyshev type quadrature formulas, Indag. Math. (N.S.) {\bf 4} (1993), no.~3, 339--362.

\bibitem{Kuijlaars2}
A. Kuijlaars, Chebyshev-type quadrature for Jacobi weight functions, J. Comput. Appl. Math. {\bf 57} (1995), no.~1-2, 171--180.

\bibitem{Kuijlaars analytic}
A. Kuijlaars, Chebyshev-type quadrature for analytic weights on the
circle and the interval, Indag. Math. (N.S.) {\bf 6} (1995), no.~4,
419--432.

\bibitem{Kuijlaars4}
A. Kuijlaars, Chebyshev-type quadrature and zeros of Faber
polynomials, J. Comput. Appl. Math. {\bf 62} (1995), no.~2,
155--179.

\bibitem{MastroianniTotik1}
G. Mastroianni\ and\ V. Totik, Weighted polynomial inequalities with doubling and $A\sb \infty$ weights, Constr. Approx. {\bf 16} (2000), no.~1, 37--71.

\bibitem{Peled}
R. Peled, Simple universal bounds for Chebyshev-type quadratures, J. Approx. Theory {\bf 162} (2010), no.~12, 2317--2348.

\bibitem{Rabau-Bajnok}
P. Rabau\ and\ B. Bajnok, Bounds for the number of nodes in
Chebyshev type quadrature formulas, J. Approx. Theory {\bf 67}
(1991), no.~2, 199--214.

\bibitem{Reyna}
J. Arias de Reyna, A generalized mean-value theorem, Monatsh. Math.
{\bf 106} (1988), no.~2, 95--97.

\bibitem{Wagner}
G. Wagner, On averaging sets, Monatsh. Math. {\bf 111} (1991),
no.~1, 69--78.

\bibitem{Zygmund}
A. Zygmund, {\it Trigonometric series. Vol. I, II}, third edition,
Cambridge Mathematical Library, Cambridge Univ. Press, Cambridge,
2002.

\end{thebibliography}
\end{document}